\DeclareMathOperator{\Tr}{Tr}
\DeclareMathOperator{\meas}{meas}
\DeclareMathOperator*{\Res}{Res}
\renewcommand{\Re}{\operatorname{Re}}
\newcommand{\abs}[1]{\lvert#1\rvert}
\newcommand{\Abs}[1]{\left\lvert#1\right\rvert}
\newcommand{\norm}[1]{\lVert#1\rVert}
\newcommand{\Norm}[1]{\left\lVert#1\right\rVert}
\newcommand{\jap}[1]{\langle#1\rangle}
\newcommand{\bbR}{{\mathbb R}}
\newcommand{\bbC}{{\mathbb C}}
\newcommand{\bbN}{{\mathbb N}}
\newcommand{\bbZ}{{\mathbb Z}}
\newcommand{\bfe}{\mathbf{e}}
\newcommand{\calL}{\mathcal{L}}
\newcommand{\calP}{\mathcal{P}}
\newcommand{\calO}{\mathcal{O}}
\newcommand{\bS}{\mathbf{S}}
\numberwithin{equation}{section}
\theoremstyle{plain}
\newtheorem{theorem}{\bf Theorem}[section]
\newtheorem*{theorem*}{Theorem 1.1$'$}
\newtheorem{lemma}[theorem]{\bf Lemma}
\newtheorem{corollary}[theorem]{\bf Corollary}
\theoremstyle{definition}
\theoremstyle{remark}
\newtheorem*{remark*}{\bf Remark}
\newcommand{\eps}{\varepsilon}
\newcommand{\const}{\mathrm{const}}
\newcommand{\bT}{{\mathbf{T}}}
\renewcommand{\[}{\begin{equation}}
\renewcommand{\]}{\end{equation}}
\begin{document}

\title[Spectral asymptotics for LCM matrices]{Spectral asymptotics for a family of LCM matrices}

\author{Titus Hilberdink}
\address{Department of Mathematics, University of Reading, Whiteknights, PO Box 220, Reading, RG6 6AX, U.K.}
\email{t.w.hilberdink@reading.ac.uk}

\author{Alexander Pushnitski}
\address{Department of Mathematics, King's College London, Strand, London, WC2R~2LS, U.K.}
\email{alexander.pushnitski@kcl.ac.uk}

\date{October 2021}

\dedicatory{To Nikolai Nikolski on the occasion of his 80th birthday with warmest wishes}

\keywords{LCM matrix, arithmetical matrix, multiplicative Toeplitz matrix, eigenvalue asymptotics}

\subjclass[2010]{47B35,11C20}

\begin{abstract}
We consider the family of arithmetical matrices given explicitly by 
$$
E(\sigma,\tau)=
\left\{\frac{n^\sigma m^\sigma}{[n,m]^\tau}\right\}_{n,m=1}^\infty,
$$
where $[n,m]$ is the least common multiple of $n$ and $m$ and the real parameters $\sigma$ and $\tau$ satisfy $\rho:=\tau-2\sigma>0$, $\tau-\sigma>\frac12$ and $\tau>0$. We prove that $E(\sigma,\tau)$ is a compact self-adjoint positive definite operator on $\ell^2(\bbN)$, and the ordered sequence of eigenvalues of $E(\sigma,\tau)$ obeys the asymptotic relation
$$
\lambda_n(E(\sigma,\tau))=\frac{\varkappa(\sigma,\tau)}{n^\rho}+o(n^{-\rho}), \quad n\to\infty,
$$
with some $\varkappa(\sigma,\tau)>0$. 
We give an application of this fact to the asymptotics of singular values of truncated multiplicative Toeplitz matrices with the symbol given by the Riemann zeta function on the vertical line with abscissa $\sigma<1/2$. We also point out a connection of the spectral analysis of $E(\sigma,\tau)$ to the theory of generalised prime systems. 
\end{abstract}

\maketitle

\section{Introduction}\label{sec.zz}

\subsection{Main result}
For real numbers $\tau$ and $\sigma$, let us consider the infinite matrix
\begin{equation}
E(\sigma,\tau)=
\left\{\frac{n^\sigma m^\sigma}{[n,m]^\tau}\right\}_{n,m=1}^\infty,
\label{E}
\end{equation}
where $[n,m]$ is the least common multiple of $n$ and $m$. We consider $E(\sigma,\tau)$ as a (potentially unbounded) linear operator on the Hilbert space $\ell^2(\bbN)$. We denote 
\begin{equation}
\rho:=\tau-2\sigma. 
\label{rho}
\end{equation}
Observe that $-\rho$ is the degree of homogeneity of the entries of $E(\sigma,\tau)$, i.e. 
$$
\frac{(kn)^\sigma (km)^\sigma}{[kn,km]^\tau}=k^{-\rho}\frac{n^\sigma m^\sigma}{[n,m]^\tau}
$$
for any $k\in\bbN$.

For $q>0$, we denote by $\bS_q$ the standard Schatten class of compact operators $A$, defined by the condition $\sum_{k=1}^\infty s_k(A)^q<\infty$, where $\{s_k(A)\}_{k=1}^\infty$ are the singular values of $A$. In particular, $\bS_2$ is the Hilbert-Schmidt class.

Our main result is:

\begin{theorem}\label{thm1}
Assume that the exponents $\sigma$ and $\tau$ satisfy 
\begin{equation}
\rho>0, \quad \tau+\rho>1, \quad \tau>0.
\label{a0}
\end{equation}
Then the operator $E(\sigma,\tau)$, defined on $\ell^2(\bbN)$ by \eqref{E}, is compact, positive definite and has a trivial kernel. Let $\{\lambda_n(E(\sigma,\tau))\}_{n=1}^\infty$ be the sequence of eigenvalues of $E(\sigma,\tau)$, enumerated in non-increasing order with multiplicities taken into account. Then for some positive constant $\varkappa(\sigma,\tau)$ (given by \eqref{b19}, \eqref{b16} below) we have
\begin{equation}
\lambda_n(E(\sigma,\tau))=\frac{\varkappa(\sigma,\tau)}{n^{\rho}}+{o}(n^{-\rho}), \quad n\to\infty.
\label{s9}
\end{equation}
In particular, $E(\sigma,\tau)\in\bS_q$ if and only if $q\rho>1$. 
\end{theorem}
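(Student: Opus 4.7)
The plan is to establish the three assertions --- positivity (with trivial kernel), compactness, and the asymptotic \eqref{s9} --- in that order. The cornerstone is the Möbius-type identity $\gcd(n,m)^\tau = \sum_{d\mid\gcd(n,m)} J_\tau(d)$, where $J_\tau := \mathrm{id}^\tau*\mu$ is the generalised Jordan totient. Combined with $[n,m]=nm/\gcd(n,m)$, this rewrites \eqref{E} as the Gram-type series
\begin{equation*}
E(\sigma,\tau)=\sum_{d=1}^\infty J_\tau(d)\,\f_d\otimes\f_d, \qquad \f_d(n)=\frac{\1[d\mid n]}{n^{\tau-\sigma}}.
\end{equation*}
Each $\f_d\in\ell^2(\bbN)$ because $2(\tau-\sigma)>1$ (equivalent to $\tau+\rho>1$), and $J_\tau(d)=d^\tau\prod_{p\mid d}(1-p^{-\tau})>0$ under $\tau>0$. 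This makes the quadratic form of $E(\sigma,\tau)$ non-negative. Triviality of the kernel follows from the density of $\Span\{\f_d\}$ in $\ell^2(\bbN)$: using the finite combinations $\chi_N := \sum_{d\mid N}\mu(d)\f_d$ and letting $N$ run through primorials, one extracts $e_1$ (and then by a shift argument every $e_n$) via dominated convergence based on $\sum_n n^{-(\tau-\sigma)}<\infty$. Compactness in the easy regime $\rho>1/2$ follows from the explicit evaluation $\|E\|_{\bS_2}^2=\zeta(2\rho)\zeta(2(\tau-\sigma))^2/\zeta(4(\tau-\sigma))<\infty$. In the harder range $0<\rho\le 1/2$ (where $E$ is no longer Hilbert--Schmidt) I would truncate the sum over $d$ at level $N$, noting that the $d\le N$ part is finite-rank, and use a Schur test with a weight $n^{-s}$ to control the tail in operator norm.

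The main step is the asymptotic \eqref{s9}. The key structural inputs are the homogeneity $E_{kn,km}=k^{-\rho}E_{n,m}$ and the prime-by-prime multiplicativity $E_{n,m}=\prod_p E^{(p)}_{v_p(n),v_p(m)}$, where $E^{(p)}_{a,b}=p^{\sigma(a+b)-\tau\max(a,b)}$ is trace class on $\ell^2(\bbZ_{\ge 0})$ with $\Tr E^{(p)}=(1-p^{-\rho})^{-1}$ (consistent with the global identity $\sum_n E_{n,n}=\zeta(\rho)$ when $\rho>1$). The strategy is to introduce a dilation-invariant ``model'' operator $\widetilde E$ --- morally a Mellin multiplier on a continuous counterpart of the multiplicative monoid $(\bbN,\times)$ --- whose eigenvalue counting function can be computed in closed form and satisfies $\#\{n:\lambda_n(\widetilde E)\ge t\}\sim(\varkappa/t)^{1/\rho}$ as $t\to 0^+$. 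The constant $\varkappa(\sigma,\tau)$ arises as a residue at the rightmost pole of the associated Dirichlet series, which is where the Euler-product formulae \eqref{b19},\eqref{b16} come from. A comparison based on Ky Fan / Weyl inequalities then transfers the asymptotic from $\widetilde E$ to $E(\sigma,\tau)$, provided one can bound the discrepancy $E(\sigma,\tau)-\widetilde E$ by an operator whose $n$-th singular value is $o(n^{-\rho})$.

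The principal obstacle is precisely this comparison step in the range $0<\rho\le 1/2$, where there is no $\bS_2$-buffer to absorb the error: control must be carried out at the level of Weyl counting functions rather than Schatten-norm estimates, and one needs a delicate quantitative bound that is uniform in the prime decomposition. The hypotheses $\tau>0$ (securing positivity of each $J_\tau(d)$) and $\tau-\sigma>1/2$ (ensuring summability of the per-prime contributions) are calibrated exactly to make this bound manageable.
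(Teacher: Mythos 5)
Your first two steps are fine in outline, and they differ from the paper's route: the Jordan-totient Gram decomposition $E(\sigma,\tau)=\sum_d J_\tau(d)\,\f_d\otimes\f_d$ gives positivity directly (the paper instead gets positivity and compactness as by-products of its prime-by-prime diagonalisation), and a finite-rank truncation in $d$ plus a Schur test with weight $n^{-s}$, $\sigma<s<\tau-\sigma$, together with a divisor-function bound, can indeed be pushed to give compactness under \eqref{a0}. But the heart of the theorem is the asymptotics \eqref{s9} with the \emph{specific} constant $\varkappa(\sigma,\tau)$, and here your proposal has a genuine gap rather than a proof: the ``model operator'' $\widetilde E$ is never defined, its eigenvalue counting function is asserted rather than computed (a genuinely dilation-invariant Mellin multiplier would have continuous spectrum, so it is not even clear what its eigenvalues are), and the comparison step --- bounding $E(\sigma,\tau)-\widetilde E$ by an operator with singular values $o(n^{-\rho})$ --- is exactly as strong as the theorem itself. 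You acknowledge this as the ``principal obstacle,'' but no mechanism is offered to overcome it, and there is a concrete reason to doubt that any naive model works: the diagonal part of $E(\sigma,\tau)$ has eigenvalues exactly $n^{-\rho}$, yet the paper shows $\varkappa(\sigma,\tfrac12+2\sigma)=\sqrt{\zeta(2+4\sigma)}/\zeta(1+2\sigma)\neq 1$, so the off-diagonal part is provably \emph{not} negligible at the $o(n^{-\rho})$ level. Any candidate $\widetilde E$ that reproduces the correct constant must already encode the full local spectral data, which is precisely what your plan leaves unconstructed.

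What is missing is the spectral (not merely entrywise) use of multiplicativity. The paper proves that the eigenvalues of $E(\sigma,\tau)$ are exactly the products $\prod_p\lambda_{k_p}(E_p(\sigma,\tau))$ over $n=\prod_p p^{k_p}$ (the product formula \eqref{b14}), by diagonalising each local factor $E_p$ and assembling the unitaries; this rests on two local estimates, $\lambda_k(E_p)=p^{-k\rho}(1+\calO(p^{-\tau/2}))$ and $\lambda_0(E_p)=1+\calO(p^{-(\tau+\rho)})$ together with localisation of the top eigenvector, the latter using $\tau+\rho>1$ to make the infinite products converge. Only after this does a Dirichlet-series argument become available: $f(s)=\sum_n\lambda_n(E(\sigma,\tau))^s=\zeta(\rho s)g(s)$ with $g$ analytic up to and beyond $\Re s=1/\rho$, and the Wiener--Ikehara Tauberian theorem yields \eqref{b12}-type asymptotics for the counting function, hence \eqref{s9} with $\varkappa=g(1/\rho)^{-\rho}$ as in \eqref{b19}, \eqref{b16}. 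Your proposal gestures at ``a residue at the rightmost pole of the associated Dirichlet series,'' but without the eigenvalue product formula there is no Dirichlet series over the eigenvalues to take a residue of; the constant $\prod_p(1-\tfrac1p)\sum_k\lambda_k(E_p)^{1/\rho}$ cannot be reached from trace-type data or from a comparison argument of the kind you describe. As it stands, the main assertion of the theorem, and with it the Schatten classification $E(\sigma,\tau)\in\bS_q\iff q\rho>1$ at the critical exponent, is not established by your argument.
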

The proof is given in Sections~\ref{sec.a} and \ref{sec.d}. In Section~\ref{sec.d4} we also discuss a minor improvement to the error  bound in the asymptotic formula \eqref{s9}.

In some particular cases it is possible to compute the constant $\varkappa(\sigma,\tau)$ explicitly. For example, if $\rho=1$ (i.e. $\tau=1+2\sigma$), we have $\varkappa(\sigma,1+2\sigma)=1$. Furthermore, if $\rho=\frac12$ (i.e. $\tau=\frac12+2\sigma$), we have 
\begin{equation}
\varkappa(\sigma,\tfrac12+2\sigma)=\sqrt{\zeta(2+4\sigma)}/\zeta(1+2\sigma);
\label{kappa}
\end{equation}
we give these computations in Section~\ref{sec.d3}. 

To explain the significance of \eqref{kappa}, we first observe that for all $\sigma$ and $\tau$, the diagonal elements of the matrix $E(\sigma,\tau)$ are $\{n^{-\rho}\}_{n=1}^\infty$. Comparing this with \eqref{s9}, one may question whether $\varkappa(\sigma,\tau)=1$ in all cases, i.e. whether only the diagonal part of $E(\sigma,\tau)$ is ``responsible'' for the eigenvalue asymptotics.  Formula \eqref{kappa}  shows that this is not the case.

We would like to comment on the sharpness of the hypotheses \eqref{a0} of Theorem~\ref{thm1}.
Since the diagonal elements of $E(\sigma,\tau)$ are $\{n^{-\rho}\}_{n=1}^\infty$, condition $\rho>0$ is necessary for the compactness of $E(\sigma,\tau)$. Applying $E(\sigma,\tau)$ to the first element of the standard basis in $\ell^2(\bbN)$, we see that condition $\tau+\rho>1$ is necessary for the boundedness of $E(\sigma,\tau)$. In fact, $E(\sigma,\tau)$ is bounded under the assumptions $\rho>0$ and $\tau+\rho>1$ only, see Theorem~\ref{thm.c3}.

The role of the condition $\tau>0$ is different: it is responsible for the positive definiteness of $E(\sigma,\tau)$. In fact, the determinant of the top left $2\times2$ submatrix of $E(\sigma,\tau)$ equals $2^{2\sigma}(1-2^{-2\tau})$, which already shows that positive definiteness breaks down when $\tau<0$.

\subsection{Alternative forms of $E(\sigma,\tau)$ and related literature}
Denote by $(n,m)$ the greatest common divisor of $n$ and $m$. Observing that 
$$
(n,m)[n,m]=nm,
$$
we see that our matrix $E(\sigma,\tau)$ can be alternatively written as
$$
E(\sigma,\tau)=
\left\{
\frac{(n,m)^\tau}{n^{\tau-\sigma}m^{\tau-\sigma}}
\right\}_{n,m=1}^\infty 
$$
or, separating explicitly the homogeneous part, as 
$$
E(\sigma,\tau)=
\left\{
\frac1{n^{\rho/2}m^{\rho/2}}\left(\frac{(n,m)}{[n,m]}\right)^{\tau/2}
\right\}_{n,m=1}^\infty .
$$
The study of arithmetical matrices involving $(n,m)$ or $[n,m]$ goes back to the paper \cite{Smith} by Smith in 1875 who computed the determinant of the $N\times N$ matrix  $\{(n,m)\}_{n,m=1}^N$. 
This matrix has become known as the GCD matrix and $\{[n,m]\}_{n,m=1}^N$ as the LCM matrix. 
GCD and LCM matrices and their submatrices (corresponding to a subset of indices with some structure) and generalisations have been studied by many authors from the algebraic point of view: positive definiteness, invertibility, divisibility, determinants, factorisations  and other structural theorems; see e.g. \cite{BL} and references therein. From the algebraic point of view replacing $(n,m)$ by $n^\sigma (n,m) m^\sigma$, say, amounts to a trivial operation of multiplication by two diagonal matrices, and so this literature is mostly concerned with the case $\sigma=0$.  However, replacing $(n,m)$ by some power $(n,m)^r$, $r\in\bbR$, is a non-trivial step, and matrices $\{(n,m)^r\}$ and $\{[n,m]^r\}$ became known as power GCD and power LCM matrices. In particular, the positive definiteness of $E(\sigma,\tau)$ follows from \cite[Theorem 1]{BL}, which gives positive definiteness of a power GCD matrix $\{(n,m)^r\}_{n,m=1}^N$ for any $r>0$.

Since late 1990s, there have also been some interest in studying the $N\to\infty$ asymptotics of eigenvalues of the $N\times N$ truncations of matrices that are variants of $E(\sigma,\tau)$; see \cite{LS,HL,Hauk,HEL,MH}. Of particular relevance to this work is \cite{LS}, where sharp bounds on the smallest and largest eigenvalues of the $N\times N$ truncations of $E(\sigma,\tau)$ have been obtained in the case $\rho=0$ and $\sigma>\frac12$. (The case $\rho=0$ and $\sigma>1$ was covered earlier by Wintner in his insightful paper  \cite{W} of 1944.) The case $\rho=0$ is of interest because $E(\sigma,2\sigma)$ factorises through a multiplicative Toeplitz matrix, see \eqref{tstar} below.

To our knowledge, the spectral properties of $E(\sigma,\tau)$ as an operator on $\ell^2(\bbN)$ (compactness, eigenvalue asymptotics) have not been discussed in the literature, except for the previous work \cite{H2,H} of one of the present authors, where it was proved that $E(\sigma,1)$ is bounded for $\sigma<\frac12$ and is in the Hilbert-Schmidt class $\bS_2$ if $\sigma<\frac14$. 
This agrees with Theorem~\ref{thm1}. 

\subsection{Key ideas of the proof}
For a natural number $n$, let us write its factorisation as a product of powers of primes
$$
n=\prod_{p\text{ prime}}p^{k_p},
$$
where $k=(k_2,k_3,k_5,k_7,\dots)$ is a multi-index, whose components are labeled by prime numbers, each component $k_p$ is a non-negative integer and $k_p=0$ except for finitely many primes $p$. Writing similarly $m=\prod_p p^{j_p}$, we see that the entries of the matrix $E(\sigma,\tau)$ can be written as 
\begin{equation}
\frac{n^\sigma m^\sigma}{[n,m]^\tau}
=
\prod_{p\text{ prime}} \frac{p^{\sigma(j_p+k_p)}}{p^{\tau(j_p\vee k_p)}},
\label{b300}
\end{equation}
where $a\vee b=\max\{a,b\}$.
In other words, the matrix $E(\sigma,\tau)$ can be viewed, at least formally, as an infinite tensor product 
\begin{equation}
E(\sigma,\tau)=\bigotimes_{p\text{ prime}}E_{p}(\sigma,\tau), \quad
E_{p}(\sigma,\tau)=\{p^{\sigma j}p^{-\tau(j\vee k)}p^{\sigma k}\}_{j,k\in\bbZ_+},
\label{b3a}
\end{equation}
where $\bbZ_+=\{0,1,2,\dots\}$. 

Our first step is the spectral analysis of the matrix $E_p(\sigma,\tau)$. It turns out that this matrix is compact for every $p$ and all of its eigenvalues are positive. Let  us denote the non-increasingly ordered sequence of eigenvalues of $E_{p}(\sigma,\tau)$ by $\{\lambda_j(E_{p}(\sigma,\tau))\}_{j=0}^\infty$. We are able to give some useful estimates for this sequence; in particular, in Lemma~\ref{lma.c2} we prove that the top eigenvalue of $E_{p}(\sigma,\tau)$ satisfies
\begin{equation}
\lambda_0(E_{p}(\sigma,\tau))=1+\calO(p^{-(\tau+\rho)}), \quad p\to\infty.
\label{a1}
\end{equation}
Furthermore, we prove that, in accordance with \eqref{b3a}, the eigenvalues of $E(\sigma,\tau)$ are given by the products of the eigenvalues of $E_{p}(\sigma,\tau)$. More precisely, we prove that \emph{there exists an enumeration} of the eigenvalues $\{\lambda_n(E(\sigma,\tau))\}_{n=1}^\infty$ of $E(\sigma,\tau)$ (here we don't insist on the non-increasing order!) such that 
$$
\lambda_n(E(\sigma,\tau))=\prod_{p \text{ prime}} \lambda_{k_p}(E_p(\sigma,\tau)), \quad\text{ if }\quad n=\prod_{p \text{ prime}}p^{k_p}.
$$
Since $k_p=0$ for all but finitely many primes,  the convergence of the above infinite product reduces to the convergence of the infinite product
$$
\prod_{p \text{ prime}} \lambda_{0}(E_p(\sigma,\tau));
$$
the latter product converges by \eqref{a1} because $\tau+\rho>1$ (this is one of our hypotheses \eqref{a0}).

The above product formula  for the eigenvalues of $E(\sigma,\tau)$ is our main tool; in combination with the estimates for the eigenvalues of $E_p(\sigma,\tau)$ it allows us to prove Theorem~\ref{thm1}. 

\subsection{The structure of the paper}
In Section~\ref{sec.b} we consider an application of Theorem~\ref{thm1} to the asymptotics of singular values of truncated multiplicative Toeplitz matrices; this was one of the original motivations for this work. In Section~\ref{sec.a} we prove  estimates for the eigenvalues of  $E_p(\sigma,\tau)$. In Section~\ref{sec.d} we put them together and give a proof of Theorem~\ref{thm1}. In Section~\ref{sec.d4} we also comment on the connection of this problem with generalised prime systems.

\subsection{Dedication}
It is a pleasure to dedicate this paper to Nikolai Nikolski,
whose recent work on the interface of analysis and analytic number theory (see e.g. \cite{Nikolski3}) and general enthusiasm for this circle of problems has served as an inspiration to us. 

\subsection{Acknowledgements}
The authors are grateful to Lyonell Boulton for stimulating discussions. 
The second author was supported by the Ministry of Science and Higher Education of the Russian Federation, contract No. 075-15-2019-1619.

\section{Application to truncated multiplicative Toeplitz matrices}\label{sec.b}

\subsection{Toeplitz and multiplicative Toeplitz matrices}
Let $\varphi$ be a bounded analytic function in the unit disk of the complex plane, defined through its Taylor series
$$
\varphi(z)=\sum_{j=0}^\infty\varphi_j z^j. 
$$
Setting $\varphi_j=0$ for $j<0$ for notational convenience, 
one can associate with $\varphi$ the infinite \emph{Toeplitz matrix}
$$
T(\varphi)=\{\varphi_{j-k}\}_{j,k=0}^\infty,
$$
considered as a linear operator on the Hilbert space $\ell^2(\mathbb Z_+)$, $\bbZ_+=\{0,1,2,\dots\}$. 
The question of the value distribution of $\varphi$ (which is called a \emph{symbol} in this context) is intimately related to the spectral properties of $T(\varphi)$ as well as to those of its finite $N\times N$ truncations
$$
T_N(\varphi)=\{\varphi_{j-k}\}_{j,k=0}^{N-1}.
$$
For example, the operator norm of $T(\varphi)$ coincides with $\sup_{\abs{z}<1}\abs{\varphi(z)}$, whereas the Szeg\H{o} limit theorem relates the distribution of singular values of $T_N(\varphi)$ as $N\to\infty$ with the value distribution of $\abs{\varphi}$ on the unit circle. We refer to the monograph \cite{Nikolski-Toeplitz} for a detailed treatment of this and related subjects.


In a similar way, a function (=\emph{symbol}) given by the Dirichlet series
$$
\psi(s)=\sum_{n=1}^\infty \psi_n n^{-s}, \quad \Re s>0,
$$
generates a \emph{multiplicative Toeplitz matrix} $\bT(\psi)$, considered as a linear operator on the Hilbert space $\ell^2(\bbN)$. For convenience of notation, let us set $\psi_q=0$ if $q$ is a positive rational number which is not an integer; then $\bT(\psi)$ 
is defined by 
$$
\bT(\psi)=\{\psi_{n/m}\}_{n,m=1}^\infty, 
$$
and one can also consider its $N\times N$ truncations 
$$
\bT_N(\psi)=\{\psi_{n/m}\}_{n,m=1}^N.
$$

Again, the spectral properties of $\bT(\psi)$ are related to the value distribution of $\psi$. In full analogy with the additive case, if $\psi(s)$ is bounded in the half-plane $\Re s>0$, then the operator norm of $\bT(\psi)$ is given by 
$$
\norm{\bT(\psi)}=\sup_{\Re s>0}\abs{\psi(s)}.
$$
This fact goes back to Toeplitz's paper \cite{Toeplitz} and was revisited and put in the framework of modern functional analysis in \cite{HLS}. 
The analogue of Szeg\H{o}'s limit theorem is more subtle and depends on the type of truncation; we refer to \cite{Nikolski-Pushnitski} (which is based on \cite{Bedos}) for a detailed discussion. 

\subsection{The matrix $\bT_N(\psi_\sigma)$}
Let us take $\sigma>1$ and consider the symbol 
$$
\psi_\sigma(s)=\zeta(\sigma+s),
$$
where $\zeta$ is the Riemann zeta-function. In this case, the coefficients of the Dirichlet series expansion of $\psi_\sigma$ are $\psi_{\sigma,n}=n^{-\sigma}$ and so the matrix $\bT_N(\psi_\sigma)$ has an explicit structure:
$$
\bT_N(\psi_\sigma)=
\begin{pmatrix}
1&0&0&0&\cdots&0\\
2^{-\sigma}&1&0&0&\cdots&0\\
3^{-\sigma}&0&1&0&\cdots&0\\
4^{-\sigma}&2^{-\sigma}&0&1&\cdots&0\\
\vdots&\vdots&\vdots&\vdots&\ddots&\vdots\\
N^{-\sigma}& \cdot&\cdot&\cdot&\cdots&1
\end{pmatrix}\ .
$$
Moreover, we have a connection to the arithmetical matrix $E(\sigma,\tau)$ (with $\rho=0$) through the identity
\begin{equation}
\bT(\psi_\sigma)^*\bT(\psi_\sigma)=\zeta(2\sigma)E(\sigma,2\sigma), \quad \sigma>1.
\label{tstar}
\end{equation}
For $\sigma\leq1$, the symbol is unbounded in the right half-plane and so the operator $\bT(\psi_\sigma)$ is unbounded as well (in fact, it is no longer even clear how to associate a multiplicative Toeplitz operator to the symbol $\psi_\sigma$, because the definition of $\psi_\sigma$ involves analytic continuation). However, one can still define the matrix $\bT_N(\psi_\sigma)$ as displayed above. 
In recent years, the study of the spectral properties (especially of the operator norm) of $\bT_N(\psi_\sigma)$ has attracted attention in relation to the value distribution of $\zeta$ on the vertical line with abscissa $\sigma$. 
In particular, in \cite{H3} it was proven that for $1/2<\sigma\leq 1$ the norms of  
$\bT_N(\psi_\sigma)$ give a lower bound for the zeta function as follows:
$$
\max_{\abs{t}\leq T}\abs{\zeta(\sigma+it)}\geq \norm{\bT_N(\psi_\sigma)}+o(1),
\quad 
N=T^{\frac23(\sigma-\frac12)-\varepsilon}\to\infty,
$$
where $\varepsilon>0$ can be taken arbitrarily small. In \cite{A,BS} this method was further improved to get sharper (possibly optimal) lower bounds on $\zeta$ on vertical lines including the critical line $\sigma=1/2$. 

\subsection{Convergence of rescaled matrices $\bT_N^*\bT_N$}
Here we are interested in the case $\sigma<\tfrac12$. It was observed in \cite{H2} that we have the entrywise convergence of rescaled matrices
\begin{equation}
\rho N^{-\rho}\bT_N(\psi_\sigma)^*\bT_N(\psi_\sigma)\to E(\sigma,1),\quad N\to\infty, \quad \sigma<\tfrac12
\label{s5}
\end{equation}
where $E(\sigma,1)$ is as in Section~\ref{sec.zz} and $\rho=1-2\sigma$, in agreement with our previous notation \eqref{rho} with $\tau=1$.

It was established in \cite{H2} that for $\sigma<\tfrac14$ the matrix $E(\sigma,1)$ is Hilbert-Schmidt and the rescaled matrices \eqref{s5} converge to $E(\sigma,1)$ in the Hilbert-Schmidt norm. Then in \cite{H}, exploiting the tensor product structure of $E(\sigma,1)$, it was shown that $E(\sigma,1)$ is bounded for $\sigma<\frac{1}{2}$.

Theorem~\ref{thm1} of this paper gives a sharper result for the Schatten class inclusion of $E(\sigma,1)$.
Below we also improve on the convergence result of \cite{H2} by showing that the rescaled matrices \eqref{s5} converge to $E(\sigma,\tau)$ in the Schatten norm $\bS_q$ whenever $q\rho>1$ and $q$ is an even integer. 
In the next theorem we consider the $N\times N$ matrices in the left hand side of \eqref{s5} as operators acting on $\ell^2(\bbN)$. 

\begin{theorem}\label{thm2}
Let $\sigma<\frac12$;
for any \underline{even natural} number $q$ such that $q\rho>1$ we have the Schatten norm convergence
$$
\Norm{\rho N^{-\rho}\bT_N(\psi_\sigma)^*\bT_N(\psi_\sigma)-E(\sigma,1)}_{\bS_q}\to0
$$
as $N\to\infty$. 
\end{theorem}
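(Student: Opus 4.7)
The plan is to exploit the fact that for a self-adjoint operator $B \in \bS_q$ with $q$ an even integer, $\norm{B}_{\bS_q}^q = \Tr(B^q)$; this is a polynomial in the matrix entries of $B$, which can be analysed by dominated convergence once a suitable entrywise majorant is in place. Write $A_N := \rho N^{-\rho}\bT_N(\psi_\sigma)^*\bT_N(\psi_\sigma)$ and $E := E(\sigma,1)$. Both are self-adjoint; $E \in \bS_q$ by Theorem~\ref{thm1} (since $q\rho > 1$) and $A_N$ is of finite rank, so $A_N - E \in \bS_q$. Expanding the $q$-th power,
$$
\norm{A_N - E}_{\bS_q}^q = \Tr\bigl((A_N-E)^q\bigr) = \sum_{w\in\{0,1\}^q}(-1)^{|w|}\Tr\bigl(X^{(N)}_{w_1}\cdots X^{(N)}_{w_q}\bigr),
$$
where $X^{(N)}_0 := A_N$, $X^{(N)}_1 := E$, and $|w|$ is the number of $1$'s in $w$.

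First I would compute $(A_N)_{jk}$ explicitly. Parametrising the sum defining $(\bT_N^*\bT_N)_{jk}$ by $n = [j,k]\ell$ gives
$$
(\bT_N(\psi_\sigma)^*\bT_N(\psi_\sigma))_{jk} = \frac{(jk)^\sigma}{[j,k]^{2\sigma}}\sum_{\ell=1}^{\lfloor N/[j,k]\rfloor}\ell^{-2\sigma}
$$
(interpreted as $0$ if $[j,k] > N$). Since $\sigma < 1/2$, the elementary bound $\sum_{\ell=1}^M \ell^{-2\sigma} \leq C M^\rho$ together with the identity $2\sigma + \rho = 1$ yields the key uniform entrywise estimate
$$
0 \leq (A_N)_{jk} \leq C\,\frac{(jk)^\sigma}{[j,k]} = C \cdot E_{jk}\qquad\text{for all } N, j, k,
$$
alongside the pointwise convergence $(A_N)_{jk} \to E_{jk}$ already recorded in \eqref{s5}.

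Next I would express each mixed trace in the alternating sum as a nonnegative multi-sum
$$
\Tr\bigl(X^{(N)}_{w_1}\cdots X^{(N)}_{w_q}\bigr) = \sum_{j_1,\ldots,j_q=1}^\infty (X^{(N)}_{w_1})_{j_1 j_2}(X^{(N)}_{w_2})_{j_2 j_3}\cdots (X^{(N)}_{w_q})_{j_q j_1}.
$$
The summand is dominated uniformly in $N$ by $C^q\, E_{j_1 j_2} E_{j_2 j_3}\cdots E_{j_q j_1}$, whose total sum equals $C^q\,\Tr(E^q) = C^q\,\norm{E}_{\bS_q}^q$, finite by Theorem~\ref{thm1}. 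Dominated convergence then gives $\Tr(X^{(N)}_{w_1}\cdots X^{(N)}_{w_q}) \to \Tr(E^q)$ for every $w \in \{0,1\}^q$; since $\sum_{w}(-1)^{|w|} = (1-1)^q = 0$, the alternating combination tends to $0$, which is exactly the Schatten norm convergence asserted.

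The main technical obstacle is the uniform entrywise majorisation $(A_N)_{jk} \leq C\,E_{jk}$: this is where the assumption $\sigma < 1/2$ is actually used, through the sharp bound $\sum_{\ell \leq M}\ell^{-2\sigma} \ll M^\rho$ that precisely matches the normalising factor $\rho N^{-\rho}$. The restriction to even $q$ is essential to the method, because it is what makes $\norm{\cdot}_{\bS_q}^q$ polynomial in matrix entries; for non-even $q$ one would need to combine the even-$q$ case (to control the Schatten norms themselves) with weak operator convergence and the uniform convexity of $\bS_q$, a considerably more delicate route that the present statement sensibly avoids.
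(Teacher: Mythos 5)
Your proposal is correct and takes essentially the same route as the paper: an explicit computation of the entries of $\bT_N(\psi_\sigma)^*\bT_N(\psi_\sigma)$, a uniform entrywise bound by a constant multiple of the entries of $E(\sigma,1)$ together with entrywise convergence, and then dominated convergence applied to the multi-sum expressing $\Tr$ of the $q$-th power (even $q$). The only cosmetic difference is bookkeeping: the paper writes the rescaled matrix as a Hadamard product $E\odot G_N$ and keeps the difference inside one multi-sum with factors $[G_N]_{n,m}-1\to 0$, whereas you expand $(A_N-E)^q$ into $2^q$ mixed traces and let the binomial cancellation $\sum_w(-1)^{|w|}=0$ finish the argument.
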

We give the proof at the end of this section.
The restriction that $q$ is an even integer seems to be of a technical nature and it would be natural to conjecture that it can be lifted. 

Let $\{s_n(\bT_N(\psi_\sigma))\}_{n=1}^N$ be the singular values of $\bT_N(\psi_\sigma)$ and $\{\lambda_n(E(\sigma,1))\}_{n=1}^\infty$ be the eigenvalues of $E(\sigma,1)$, both sequences listed in non-increasing order. 
\begin{corollary}\label{cr3}
Let $\sigma<\frac12$; then we have 
$$
s_n^2(\bT_N(\psi_\sigma))= \frac{1}{\rho}N^\rho\bigl(\lambda_n(E(\sigma,1))+o(1)\bigr), \quad N\to\infty
$$
uniformly over $1\leq n\leq N$.
\end{corollary}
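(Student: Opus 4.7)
The plan is to deduce the corollary from Theorem~\ref{thm2} together with the standard Weyl perturbation inequality for eigenvalues of self-adjoint compact operators. Both operators involved are positive semi-definite, so enumerating their eigenvalues in non-increasing order is unambiguous and natural.

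First I would translate singular values into eigenvalues: since $\bT_N(\psi_\sigma)$ is a finite-rank operator on $\ell^2(\bbN)$, its singular values squared coincide with the non-zero eigenvalues of $A_N:=\rho N^{-\rho}\bT_N(\psi_\sigma)^*\bT_N(\psi_\sigma)$ divided by $\rho N^{-\rho}$, i.e.
$$
\rho N^{-\rho}s_n^2(\bT_N(\psi_\sigma))=\lambda_n(A_N),\qquad 1\leq n\leq N,
$$
with the convention that $\lambda_n(A_N)=0$ for $n>N$. Thus the claim of the corollary is equivalent to $\lambda_n(A_N)\to\lambda_n(E(\sigma,1))$ uniformly in $n$.

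Next I would invoke Theorem~\ref{thm2}: since $\rho>0$, there exists an even integer $q$ with $q\rho>1$, and Theorem~\ref{thm2} then gives $\norm{A_N-E(\sigma,1)}_{\bS_q}\to0$. Because the operator norm is dominated by every Schatten norm, this also yields
$$
\norm{A_N-E(\sigma,1)}\to 0,\qquad N\to\infty.
$$

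Finally, I would apply the Weyl perturbation inequality for self-adjoint compact operators: for any two such operators $A,B$ with eigenvalues listed in non-increasing order (with multiplicity),
$$
\sup_{n\geq1}\abs{\lambda_n(A)-\lambda_n(B)}\leq\norm{A-B}.
$$
Applied with $A=A_N$ and $B=E(\sigma,1)$, this gives uniform (in $n$) convergence $\lambda_n(A_N)=\lambda_n(E(\sigma,1))+o(1)$, which is precisely the content of the corollary after rearranging.

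The argument is essentially a one-step deduction, so there is no real obstacle to overcome; the only point that requires care is the bookkeeping with orderings and the extension of $\bT_N^*\bT_N$ to an operator on $\ell^2(\bbN)$ by zero padding, so that the two sides of the Weyl inequality compare sequences indexed by the same $n\in\bbN$.
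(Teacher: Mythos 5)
Your proposal is correct and follows essentially the same route as the paper: choose an even $q$ with $q\rho>1$, use Theorem~\ref{thm2} plus the domination of the operator norm by the Schatten norm, and then pass from operator-norm convergence to uniform convergence of ordered eigenvalues. The only difference is that you make explicit the Weyl perturbation inequality step, which the paper's proof uses implicitly.
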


\begin{proof}
For a given $\sigma<\frac12$, one can always find sufficiently large even integer $q$ such that $q\rho>1$. Now apply Theorem~\ref{thm2} and use the fact that any Schatten norm dominates the operator norm; we obtain
$$
\sup_{1\leq n\leq N}\abs{\rho N^{-\rho}s_n^2(\bT_N(\psi_\sigma))-\lambda_n(E(\sigma,1))}\to0
$$
as $N\to\infty$. This is equivalent to the required statement. 
\end{proof}

\begin{remark*}
Corollary~\ref{cr3} extends the same result for $\sigma<\frac14$ in \cite{H2}. 
\end{remark*}

We observe that there is some similarity between Theorem~\ref{thm2} and the circle of problems considered recently in \cite{Bo,BoVi,BoBoGr}. In these papers, the authors consider truncated (classical) Toeplitz matrices corresponding to unbounded symbols with a specific power singularity on the unit circle. They prove that after a suitable rescaling, the singular values of these truncated Toeplitz matrices converge to the singular values of an explicit compact operator. Of course, in our case the singularity of the symbol $\psi_\sigma(it)=\zeta(\sigma+it)$ (as $t\to\infty$) is far more subtle. 

\subsection{Discussion}
We come back to the  comparison of the value distribution of the symbol $\psi_\sigma(it)=\zeta(\sigma+it)$ with the singular value distribution of $\bT_N(\psi_\sigma)$. Although to our knowledge there are no general theorems relating these distributions, it is still of interest to compare them, and the combination of Corollary~\ref{cr3} and Theorem~\ref{thm1} allows us to do so. 

We first recall that by the reflection formula for the Riemann zeta, we have for any $\sigma<1/2$, 
$$
\abs{\zeta(\sigma+it)}=\left(\frac{t}{2\pi}\right)^{\frac12-\sigma}\abs{\zeta(1-\sigma-it)}(1+o(1)), \quad \abs{t}\to\infty.
$$
Let us focus on the simple case $\sigma<0$. By the reflection formula, we have
$$
\sup_{\abs{t}\leq N}\abs{\zeta(\sigma+it)}\asymp N^{\frac12-\sigma}, \quad N\to\infty. 
$$
By Theorem~\ref{thm2}, we also have
$$
\norm{\bT_N(\psi_\sigma)}\asymp N^{\frac12-\sigma}, \quad N\to\infty,
$$
and so the rate of growth of the norm agrees with the supremum of the symbol on the interval $[-N,N]$. However, the value distribution of the symbol on the same interval does NOT agree with the singular value distribution of $\bT_N(\psi_\sigma)$. Indeed, for any sufficiently small $\lambda>0$, we have
$$
\meas\{t\in[-N,N]: \abs{\zeta(\sigma+it)}>\lambda N^{\frac12-\sigma}\}\asymp N,\quad N\to\infty.
$$
On the other hand, by Corollary~\ref{cr3} and the fact that $\lambda_n(E(\sigma,1))\to0$ as $n\to\infty$ we have
$$
\#\{n: s_n(\bT_N(\psi_\sigma))>\lambda N^{\frac12-\sigma}\}=O(1), \quad N\to\infty
$$
for any $\lambda>0$.

\subsection{Proof of Theorem~\ref{thm2}}
For $x>0$ let 
$$
F(x):=\sum_{n\leq x}n^{-2\sigma}=\frac1\rho x^\rho+\calO(1) +\calO(x^{\rho-1}), \quad x\to\infty.
$$
Let us reproduce the calculation of the entries of $\bT_N(\psi_\sigma)^*\bT_N(\psi_\sigma)$ from \cite{H2}. We have
\begin{align*}
[\bT_N(\psi_\sigma)^*\bT_N(\psi_\sigma)]_{nm}
&=
\sum_{r=1}^N[\bT_N(\psi_\sigma)^*]_{nr}[\bT_N(\psi_\sigma)]_{rm}
\\
&=
\sum_{\genfrac{}{}{0pt}{1}{r=1}{n|r \text{ and } m|r}}^N (r/n)^{-\sigma}(r/m)^{-\sigma}
=
n^\sigma m^\sigma \sum_{\genfrac{}{}{0pt}{1}{r=1}{n|r \text{ and } m|r}}^N r^{-2\sigma}.
\end{align*}
Setting $r=k[n,m]$, we obtain 
\begin{align}
[\bT_N(\psi_\sigma)^*\bT_N(\psi_\sigma)]_{nm}
&=
n^\sigma m^\sigma
\sum_{k\leq N/[n,m]} (k[n,m])^{-2\sigma}
\notag
\\
&=
\frac{n^\sigma m^\sigma}{[n,m]^{2\sigma}}\sum_{k\leq N/[n,m]}k^{-2\sigma}
=
\frac{n^\sigma m^\sigma}{[n,m]^{2\sigma}}F\bigl(\tfrac{N}{[n,m]}\bigr).
\label{b4}
\end{align}
Using the asymptotics of $F$, 
it is easy to determine the entrywise asymptotic behaviour of these matrices:
$$
\frac1{F(N)}
[\bT_N^*(\psi_\sigma)\bT_N(\psi_\sigma)]_{nm}
=
\frac{n^\sigma m^\sigma}{[n,m]^{2\sigma}}\frac{F(\tfrac{N}{[n,m]})}{F(N)}
\to
\frac{n^\sigma m^\sigma}{[n,m]^{2\sigma}}[n,m]^{-\rho}
=
\frac{n^\sigma m^\sigma}{[n,m]}
$$
as $N\to\infty$, where we recognise $E(\sigma,1)$ in the right hand side. We need to prove that this convergence is also valid in the $\bS_q$ norm for any even integer $q$ with $q\rho>1$. 

Denote by $G_N$ the infinite matrix with entries
$$
[G_N]_{n,m}=[n,m]^\rho\frac{F(\frac{N}{[n,m]})}{F(N)}. 
$$
Formula \eqref{b4} can be written as 
$$
\frac1{F(N)}\bT_N(\psi_\sigma)^*\bT_N(\psi_\sigma)=E(\sigma,1)\odot G_N, 
$$
where $\odot$ is the Hadamard product (entrywise multiplication of matrices). 
In the rest of the proof we write $E$ instead of $E(\sigma,1)$ for readability.
By the properties of $F$,  we have a uniform estimate 
\begin{equation}
\abs{[G_N]_{n,m}}\leq C_\sigma,
\label{b5}
\end{equation}
where $C_\sigma$ is independent of $n,m,N$ and 
\begin{equation}
[G_N]_{n,m}\to1, \quad N\to\infty,
\label{b6}
\end{equation}
for any $n,m$. 
By Theorem~\ref{thm1}, we have $E\in\bS_q$ for any $q>1/\rho$. 
Since $q$ is an even integer, we have 
\begin{equation}
\norm{E}_{\bS_q}^q
=
\Tr(E^q)
=
\sum_{k_1,\dots,k_q} [E]_{k_1,k_2}\cdots [E]_{k_q,k_1} <\infty;
\label{b3}
\end{equation}
observe that all entries of $E$ are positive. 
Similarly,
\begin{align*}
\norm{E&\odot G_N-E}_{\bS_q}^q
=
\Tr\bigl((E\odot G_N-E)^q\bigr)
\\
&=
\sum_{k_1,\dots,k_q} [E]_{k_1,k_2}\cdots [E]_{k_q,k_1}
([G_N]_{k_1,k_2}-1)\cdots ([G_N]_{k_q,k_1}-1). 
\end{align*}
Now using \eqref{b3} together with the properties \eqref{b5}, \eqref{b6} of $G_N$ and applying the dominated convergence theorem, we see that the last sum goes to zero as $N\to\infty$. 
The proof of Theorem~\ref{thm2} is complete. \qed

\section{Spectral analysis of $E_{p}(\sigma,\tau)$}\label{sec.a}

\subsection{Preliminaries}
In this section we need some notation for infinite matrices, considered as operators in $\ell^2(\bbZ_+)$. If $\gamma=\{\gamma_k\}_{k=0}^\infty$ is a sequence of real numbers, we denote by $D(\gamma)$ the diagonal matrix with elements $\{\gamma_0,\gamma_1,\dots\}$ on the diagonal; in other words, $D(\gamma)$ is the operator of multiplication by the sequence $\gamma$ in $\ell^2(\bbZ_+)$. By a slight abuse of notation, we write $D(a^k)$ if $\gamma_k=a^k$. 
Let $S$ be the standard shift operator, defined by $(Sx)_k=x_{k-1}$ for $k\geq1$ and $(Sx)_0=0$. 
We denote the inner product in a Hilbert space by $\jap{\cdot,\cdot}$. 

As above, we denote by $E_p(\sigma,\tau)$ the infinite matrix $\{p^{\sigma j}p^{-\tau(j\vee k)}p^{\sigma k}\}_{j,k\in\bbZ_+}$, considered as a linear operator in $\ell^2(\bbZ_+)$. It will be convenient to consider $p>1$ as an arbitrary real number (not necessarily an integer prime). 

\subsection{Eigenvalue estimates for $E_p(\sigma,\tau)$}

\begin{lemma}
Let $p>1$. Then for any $a>\frac1{\sqrt{p}}\frac1{1-\frac1p}$ and any $x\in\ell^2(\bbZ_+)$ we have the inequalities
$$
\frac{1-\frac{a}{\sqrt{p}}}{1-\frac1p+\frac1{a\sqrt{p}}}
\jap{D(p^{-k})x,x}
\leq
\frac1{1-\frac1p}
\jap{E_p(0,1)x,x}
\leq 
\frac{1+\frac{a}{\sqrt{p}}}{1-\frac1p-\frac1{a\sqrt{p}}}
\jap{D(p^{-k})x,x}.
$$
\end{lemma}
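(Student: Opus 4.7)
The plan is to isolate the diagonal contribution $\jap{D(p^{-k})x,x}$ in the quadratic form of $E_p(0,1)$ and then to absorb the off-diagonal remainder back into $\jap{E_p(0,1)x,x}$ itself through a Young-type inequality whose free parameter will be renamed into $a$ at the very end. First, introducing the partial sums $Y_\ell:=\sum_{j\leq\ell}x_j$ and splitting the double sum defining the form according to whether $j<k$, $j=k$ or $j>k$ gives the identity
\begin{equation*}
\jap{E_p(0,1)x,x}=\jap{D(p^{-k})x,x}+2\Re\sum_{k\geq 1}p^{-k}\overline{Y_{k-1}}x_k.
\end{equation*}
The key auxiliary identity, which makes the whole scheme self-consistent, is
\begin{equation*}
\sum_{k\geq 1}p^{-k}|Y_{k-1}|^2=\tfrac{1}{p-1}\jap{E_p(0,1)x,x};
\end{equation*}
it follows by expanding the square, interchanging the order of summation, and summing the geometric series $\sum_{k>\max(j,j')}p^{-k}=p^{-\max(j,j')}/(p-1)$, which exactly reconstructs the matrix $E_p(0,1)$ on the right.

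With these two identities in hand, for any $\gamma>0$ I would apply Young's inequality $2|\overline{Y_{k-1}}x_k|\leq \gamma|Y_{k-1}|^2+\gamma^{-1}|x_k|^2$ termwise and sum against $p^{-k}$. Using the auxiliary identity on the first piece and the trivial bound $\sum_{k\geq 1}p^{-k}|x_k|^2\leq \jap{D(p^{-k})x,x}$ on the second collapses the whole cross term into the single estimate
\begin{equation*}
\bigl|\jap{E_p(0,1)x,x}-\jap{D(p^{-k})x,x}\bigr|\leq \tfrac{\gamma}{p-1}\jap{E_p(0,1)x,x}+\tfrac{1}{\gamma}\jap{D(p^{-k})x,x}.
\end{equation*}
Solving this linear inequality for $\jap{E_p(0,1)x,x}$ produces a two-sided bound with constants $(1\pm 1/\gamma)/(1\mp \gamma/(p-1))$; for the upper bound the denominator stays positive precisely when $\gamma<p-1$.

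The final step is purely algebraic: I would divide throughout by $1-1/p$ and set $\gamma=\sqrt{p}/a$. Under this substitution $1/\gamma=a/\sqrt{p}$ and $\gamma/p=1/(a\sqrt{p})$, so the constraint $\gamma<p-1$ becomes exactly the stated restriction $a>\frac{1}{\sqrt{p}}\frac{1}{1-1/p}$, and the two constants collapse into the two expressions written in the lemma. The main obstacle, in my view, is spotting the self-referential identity $\sum_{k\geq 1}p^{-k}|Y_{k-1}|^2=\jap{E_p(0,1)x,x}/(p-1)$: it is this identity that allows the off-diagonal remainder to be bounded partly by $\jap{E_p(0,1)x,x}$ itself, rather than only by $\jap{D(p^{-k})x,x}$ as a naive Cauchy-Schwarz would deliver, and it is precisely this self-referential structure that forces the free parameter $a$ to enter the final bounds in the specific asymmetric form stated.
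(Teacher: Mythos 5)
Your proof is correct and is essentially the paper's argument written in coordinates: your key identity $\sum_{k\ge1}p^{-k}\abs{Y_{k-1}}^2=\tfrac1{p-1}\jap{E_p(0,1)x,x}$ is exactly the paper's Step 1 identity $\jap{E_p(0,1)x,x}=(1-p^{-1})\sum_{k\ge0}p^{-k}\abs{Y_k}^2$ after an index shift (the paper encodes that shift via the isometric shift operator $S$), and your split into diagonal plus cross term is what the paper obtains by squaring its operator identity in Step 3. The parameterized Young inequality, the resulting linear inequality solved for the quadratic form, the constraint on the parameter (your $\gamma<p-1$ is the paper's $a>\frac1{\sqrt p}\frac1{1-1/p}$ under $\gamma=\sqrt p/a$), and the final constants all coincide with the paper's.
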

\begin{remark*}
For the first inequality to be meaningful, we need $1-\frac{a}{\sqrt{p}}>0$, i.e. $a<\sqrt{p}$. This is compatible with the condition $a>\frac1{\sqrt{p}}\frac1{1-\frac1p}$ only for $p>2$. However, this is not of our concern here, because eventually we are interested in taking $p\to\infty$. 
\end{remark*}
\begin{proof}
\emph{Step 1:}
Our first step is to check the identity
\begin{equation}
\jap{E_p(0,1)x,x}
=
(1-p^{-1})\norm{D(p^{-\frac12 k})(I-S)^{-1}x}^2, \quad x\in\ell^2(\bbZ_+).
\label{c2}
\end{equation}
Here 
$$
(I-S)^{-1}x=(x_0,x_0+x_1,x_0+x_1+x_2,\dots)
$$
is a sequence which is of course not necessarily in $\ell^2(\bbZ_+)$, but the combination $D(p^{-\frac12 k})(I-S)^{-1}x$ is in $\ell^2(\bbZ_+)$, and so \eqref{c2} makes sense.

In order to verify \eqref{c2}, we recall that the matrix elements $\{p^{-j\vee k} \}$ of $E_p(0,1)$ are constant on the ``corners'' $j\vee k=\const$. Summing over these ``corners'', we obtain 
\begin{align*}
\jap{E_p(0,1)x,x}
=&
\abs{x_0}^2
+p^{-1}(\abs{x_0+x_1}^2-\abs{x_0}^2)
\\
&+p^{-2}(\abs{x_0+x_1+x_2}^2-\abs{x_0+x_1}^2)+\cdots
\\
=&
(1-p^{-1})\abs{x_0}^2
+(1-p^{-1})p^{-1}\abs{x_0+x_1}^2
\\
&+(1-p^{-1})p^{-2}\abs{x_0+x_1+x_2}^2+\cdots
\\
=&
(1-p^{-1})\sum_{k=0}^\infty p^{-k}\Abs{\sum_{j=0}^k x_j}^2
\\
=&
(1-p^{-1})\norm{D(p^{-\frac12 k})(I-S)^{-1}x}^2.
\end{align*}

\emph{Step 2:}
The second step is to verify the identity 
\begin{equation}
D(p^{-\frac12 k})(I-S)^{-1}x
=
D(p^{-\frac12 k})x
+
p^{-\frac12}SD(p^{-\frac12 k})(I-S)^{-1}x.
\label{c3}
\end{equation}
We start from the identity 
$$
D(p^{-\frac12 k})Sy=p^{-\frac12}SD(p^{-\frac12 k})y
$$
for any $y\in\ell^\infty(\bbZ_+)$. Rearranging and adding $D(p^{-\frac12 k})y$ to both sides, we obtain
$$
D(p^{-\frac12 k})y=D(p^{-\frac12 k})(I-S)y+p^{-\frac12}SD(p^{-\frac12 k})y. 
$$
Taking $y=(I-S)^{-1}x$ and observing that $(I-S)y=x$, we arrive at \eqref{c3}.

\emph{Step 3:}
Let us take the square of the norm of both sides of \eqref{c3} and use the isometricity of $S$:
\begin{align*}
\norm{D(p^{-\frac12 k})(I-S)^{-1}x}^2
=&
\norm{D(p^{-\frac12 k})x}^2
+
p^{-1}\norm{D(p^{-\frac12 k})(I-S)^{-1}x}^2
\\
&+2p^{-\frac12}\Re\jap{SD(p^{-\frac12 k})(I-S)^{-1}x,D(p^{-\frac12 k})x}.
\end{align*}
Using the inequality 
$$
2\abs{\Re\jap{x,y}}\leq a\norm{x}^2+\frac1a \norm{y}^2,
$$
from here we find 
\begin{align*}
\left(1-\tfrac1p-\tfrac1{a\sqrt{p}}\right)\norm{D(p^{-\frac12 k})(I-S)^{-1}x}^2
&\leq
\left(1+\tfrac{a}{\sqrt{p}}\right)\norm{D(p^{-\frac12 k})x}^2,
\\
\left(1-\tfrac1p+\tfrac1{a\sqrt{p}}\right)\norm{D(p^{-\frac12 k})(I-S)^{-1}x}^2
&\geq
\left(1-\tfrac{a}{\sqrt{p}}\right)\norm{D(p^{-\frac12 k})x}^2.
\end{align*}
Since $a>\frac1{\sqrt{p}}\frac1{1-\frac1p}$, both brackets on the left here are positive, so we can divide by these brackets.
Using \eqref{c2}, we arrive at the required estimate. 
\end{proof}

\begin{lemma}\label{lma.c1}
Let $\rho>0$, $\tau>0$ and $p>1$. Then $E_p(\sigma,\tau)$ is compact and positive definite and its eigenvalues 
$\{\lambda_k(E_p(\sigma,\tau))\}_{k=0}^\infty$ (listed in non-decreasing order) satisfy 
$$
\lambda_k(E_p(\sigma,\tau))=p^{-k\rho}(1+\calO(p^{-\frac12\tau})), \quad p\to\infty,
$$
with the constants in $\calO(p^{-\frac12\tau})$ independent on $k\in\bbZ_+$.
\end{lemma}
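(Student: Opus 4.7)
The plan is to bootstrap from the preceding lemma by exploiting two elementary identities: the scaling
$$
E_p(0,\tau)=E_{p^\tau}(0,1),
$$
which is immediate since $p^{-\tau(j\vee k)}=(p^\tau)^{-(j\vee k)}$, and the congruence
$$
E_p(\sigma,\tau)=D(p^{\sigma k})\,E_p(0,\tau)\,D(p^{\sigma k}),
$$
which just restates the factorisation of the entry $p^{\sigma j}p^{-\tau(j\vee k)}p^{\sigma k}$.

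First I would apply the preceding lemma with $p$ replaced by $p^\tau$ and $a$ fixed (say $a=1$; the admissibility conditions on $a$ are satisfied once $p$ is large enough). Both bounding ratios collapse to $1+\calO(p^{-\tau/2})$, producing the form sandwich
$$
(1-Cp^{-\tau/2})\jap{D(p^{-\tau k})x,x}\leq \jap{E_p(0,\tau)x,x}\leq (1+Cp^{-\tau/2})\jap{D(p^{-\tau k})x,x}
$$
for all finitely supported $x$. Substituting $D(p^{\sigma k})x$ in place of $x$ and using the congruence together with the identity $D(p^{\sigma k})D(p^{-\tau k})D(p^{\sigma k})=D(p^{-\rho k})$ (recall $\rho=\tau-2\sigma$) transfers this into
$$
(1-Cp^{-\tau/2})\jap{D(p^{-\rho k})x,x}\leq \jap{E_p(\sigma,\tau)x,x}\leq (1+Cp^{-\tau/2})\jap{D(p^{-\rho k})x,x}.
$$
Since $D(p^{-\rho k})$ is a compact positive diagonal whose eigenvalues $\{p^{-\rho k}\}_{k=0}^\infty$ are already in non-increasing order, the Courant--Fischer min--max principle applied to the above sandwich gives
$$
\lambda_k(E_p(\sigma,\tau))=p^{-k\rho}\bigl(1+\calO(p^{-\tau/2})\bigr)
$$
with constants uniform in $k$, which is the target asymptotic.

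Compactness of $E_p(\sigma,\tau)$ for all $p>1$ (and not merely for $p$ large) I would establish separately by a direct Hilbert--Schmidt check: the double sum $\sum_{j,k}p^{2\sigma(j+k)-2\tau(j\vee k)}$ converges under the hypotheses $\rho>0$, $\tau>0$ after splitting into $j\leq k$ and $j>k$ and evaluating the geometric sums. For positive definiteness I would extend the corners-summation identity from Step~1 of the previous lemma to general $\tau>0$, obtaining
$$
\jap{E_p(0,\tau)x,x}=(1-p^{-\tau})\sum_{k=0}^\infty p^{-\tau k}\Abs{\sum_{j=0}^k x_j}^2,
$$
which is strictly positive whenever $x\neq 0$ (the expression vanishes only if every partial sum $\sum_{j=0}^k x_j$ vanishes, forcing $x=0$); strict positivity of $E_p(\sigma,\tau)$ then follows from the congruence since $D(p^{\sigma k})$ is injective on finitely supported vectors. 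The only mild technical point in the whole argument is that when $\sigma>0$ the diagonal $D(p^{\sigma k})$ is unbounded; this is harmless because all form manipulations above take place on the dense subspace of finitely supported vectors (a form core), and the form inequalities extend by continuity once the uniform bounds are in place. The real substance of the lemma is therefore already in the preceding lemma — the present statement is a matter of packaging.
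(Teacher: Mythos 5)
Your argument is essentially the paper's own proof: the paper likewise writes $E_p(\sigma,\tau)=D(p^{\sigma k})E_{p^\tau}(0,1)D(p^{\sigma k})$, applies the preceding lemma with $p^\tau$ in place of $p$ and a fixed admissible $a$ (it takes $a=1/2$) to the vectors $D(p^{\sigma k})y$, obtains the same $1+\calO(p^{-\tau/2})$ form sandwich with $D(p^{-\rho k})$, and concludes by the min--max principle. Your supplementary remarks on compactness (Hilbert--Schmidt check) and strict positivity (the corners identity for general $\tau>0$) are correct and match, in slightly more explicit form, the paper's brief comments on these points.
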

\begin{proof}
Observe that by the definition of $E_p(\sigma,\tau)$, we have
$$
E_p(\sigma,\tau)=D(p^{\sigma k})E_{p^\tau}(0,1)D(p^{\sigma k}).
$$
Now let us take $a=1/2$, say, in the previous lemma and apply it to $x=D(p^{\sigma k})y$ and to $E_{p^\tau}(0,1)$ in place of $E_{p}(0,1)$. We obtain, for sufficiently large $p$, 
$$
(1+\calO(p^{-\frac12\tau}))
\jap{D(p^{-\rho k})y,y}
\leq
\jap{E_p(\sigma,\tau)y,y}
\leq 
(1+\calO(p^{-\frac12\tau}))
\jap{D(p^{-\rho k})y,y}.
$$
Since the eigenvalues of $D(p^{-\rho k})$ are given by $\{p^{-\rho k}\}_{k=0}^\infty$, by the min-max principle we obtain the required upper and lower bounds for the eigenvalues of $E_p(\sigma,\tau)$. 
The positive definiteness of $E_p(\sigma,\tau)$ (for every $p$) also follows from the lower estimate of the previous lemma by choosing a suitable $a$. 
\end{proof}

\subsection{The top eigenvalue}

We denote by $e_0=(1,0,0,\dots)$ the first element of the standard basis in $\ell^2(\bbZ_+)$. 
\begin{lemma}\label{lma.c2}
Let $\rho>0$, $\tau+\rho>0$  and $p>1$. 
\begin{enumerate}[\rm (i)]
\item
For all sufficiently large $p$, the top eigenvalue of $E_p(\sigma,\tau)$ is simple and satisfies
$$
\lambda_0(E_p(\sigma,\tau))=1+\calO(p^{-(\tau+\rho)}), \quad p\to\infty.
$$
\item
Let $\varphi_{p}$ be the normalised eigenvector of $E_p(\sigma,\tau)$ corresponding to the top eigenvalue. 
Then 
$$
\abs{\jap{\varphi_{p},e_0}}=1+\calO(p^{-(\tau+\rho)}), \quad p\to\infty.
$$
\end{enumerate}
\end{lemma}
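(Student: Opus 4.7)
The plan is to isolate the contribution of $e_0$ via the block decomposition
\begin{equation*}
E_p(\sigma,\tau)=\begin{pmatrix}1 & v^*\\ v & A\end{pmatrix}
\end{equation*}
associated with the splitting $\ell^2(\bbZ_+)=\bbC e_0\oplus e_0^\perp$. Here the column vector $v$ has components $v_j=[E_p(\sigma,\tau)]_{j0}=p^{-(\tau-\sigma)j}$ for $j\geq 1$, and $A$ is the compression of $E_p(\sigma,\tau)$ to $e_0^\perp$. Reading off the identity $[E_p(\sigma,\tau)]_{j+1,k+1}=p^{-\rho}[E_p(\sigma,\tau)]_{jk}$ directly from the definition, we see that $A$ is unitarily equivalent, via the shift $S$, to $p^{-\rho}E_p(\sigma,\tau)$, so in particular $\norm{A}=p^{-\rho}\lambda_0(E_p(\sigma,\tau))$. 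A short computation yields $\norm{v}^2=\sum_{j\geq 1}p^{-(\tau+\rho)j}=\calO(p^{-(\tau+\rho)})$ (using $2(\tau-\sigma)=\tau+\rho$), which records the key smallness driving the whole argument.

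Since the hypothesis $\tau+\rho>0$ of the present lemma is weaker than the hypothesis $\tau>0$ of Lemma~\ref{lma.c1}, the first task is a bootstrap bound on $\lambda_0(E_p(\sigma,\tau))$. Using the elementary block inequality $\lambda_0(E_p(\sigma,\tau))\leq \max(1,\norm{A})+\norm{v}$ together with $\norm{A}=p^{-\rho}\lambda_0(E_p(\sigma,\tau))$, the estimate closes on itself and forces $\lambda_0(E_p(\sigma,\tau))\leq 2$, hence $\norm{A}\leq 2p^{-\rho}$, for all sufficiently large $p$. The lower bound $\lambda_0(E_p(\sigma,\tau))\geq\jap{E_p(\sigma,\tau)e_0,e_0}=1$ is immediate from the Rayleigh quotient, while the min-max principle applied to the codimension-one subspace $e_0^\perp$ yields $\lambda_1(E_p(\sigma,\tau))\leq\norm{A}\leq 2p^{-\rho}$. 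For $p$ large enough that $2p^{-\rho}<1$, this proves simplicity of $\lambda_0$ and, moreover, shows that $\lambda_0-A$ is invertible with $\norm{(\lambda_0-A)^{-1}}\leq 2$.

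Writing the normalised top eigenvector as $\varphi_p=\alpha e_0+\beta$ with $\beta\in e_0^\perp$, the block eigenvalue equations read $\alpha+v^*\beta=\lambda_0\alpha$ and $\alpha v+A\beta=\lambda_0\beta$. Solving the second for $\beta=\alpha(\lambda_0-A)^{-1}v$ and substituting into the first yields the Schur complement identity
\begin{equation*}
\lambda_0=1+v^*(\lambda_0-A)^{-1}v,
\end{equation*}
and the positivity of $(\lambda_0-A)^{-1}$ together with the earlier bounds gives $0\leq\lambda_0-1\leq 2\norm{v}^2=\calO(p^{-(\tau+\rho)})$, which is part~(i). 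For part~(ii), the relation $\beta=\alpha(\lambda_0-A)^{-1}v$ yields $\norm{\beta}^2\leq 4\abs{\alpha}^2\norm{v}^2$, and the normalisation $\abs{\alpha}^2+\norm{\beta}^2=1$ then forces $\abs{\alpha}^2=1-\calO(p^{-(\tau+\rho)})$; since $\jap{\varphi_p,e_0}=\alpha$, the claimed estimate follows. The only slightly subtle step is the bootstrap bound on $\lambda_0(E_p(\sigma,\tau))$; once the self-similarity $A\cong p^{-\rho}E_p(\sigma,\tau)$ is in place, the Schur complement identity delivers the sharp second-order rate $p^{-(\tau+\rho)}$ automatically from $\norm{v}^2$, in agreement with what second-order perturbation theory around the diagonal $D(p^{-\rho k})$ would predict.
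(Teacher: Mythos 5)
Your core argument is the same as the paper's: split off the $e_0$ coordinate, observe that the coupling vector has $\norm{v}^2=\calO(p^{-(\tau+\rho)})$, and extract $\lambda_0=1+v^*(\lambda_0-A)^{-1}v$ from the Schur complement, with part (ii) obtained from the smallness of the component of $\varphi_p$ orthogonal to $e_0$. The genuinely different ingredients are welcome: you bound the compression $A$ via the exact self-similarity $[E_p]_{j+1,k+1}=p^{-\rho}[E_p]_{jk}$ (so $A$ is unitarily equivalent to $p^{-\rho}E_p$) plus a bootstrap, where the paper instead computes the Hilbert--Schmidt norm of $E_p$ and uses homogeneity to get $\Tr(E_p^\perp)^2=\calO(p^{-2\rho})$; and you prove (ii) directly from the eigenvector components, where the paper computes the residue of $\jap{(E_p-\lambda I)^{-1}e_0,e_0}$ at $\lambda_0$. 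Both variants are legitimate and arguably cleaner.

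There are, however, two points you must close. First, the bootstrap ``the estimate closes on itself and forces $\lambda_0\le 2$'' is circular unless you already know that $E_p$ is bounded: the inequality $x\le\max(1,p^{-\rho}x)+\norm{v}$ is vacuous if $x=+\infty$. Moreover, the lemma's hypotheses are $\rho>0$, $\tau+\rho>0$ only (they allow $\tau\le 0$), so you cannot import compactness and the existence of a top eigenvalue/eigenvector from Lemma~\ref{lma.c1}, which assumes $\tau>0$; yet your min-max step, the simplicity claim and the eigenvalue equations all presuppose a compact operator with a genuine top eigenvector. The paper supplies exactly this by the two-line Hilbert--Schmidt estimate $\Tr(E_p)^2\le \frac{2}{1-p^{-2(\tau+\rho)}}\cdot\frac{1}{1-p^{-2\rho}}<\infty$; alternatively you could run your bootstrap on finite truncations and let the size tend to infinity. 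Second, you write $\norm{A}=p^{-\rho}\lambda_0(E_p)$, but the self-similarity gives $\norm{A}=p^{-\rho}\norm{E_p}$, and $\norm{E_p}=\lambda_0(E_p)$ is not automatic here since $E_p$ need not be positive semi-definite when $\tau<0$ (it does hold because the matrix has positive entries, but that requires a word, e.g.\ a Perron--Frobenius/Krein--Rutman remark). The harmless fix is to phrase the bootstrap entirely in terms of $\norm{E_p}$: then $\norm{E_p}\le 2$, $\norm{A}\le 2p^{-\rho}$, $\lambda_0\ge\jap{E_pe_0,e_0}=1$, and the rest of your argument (including the tacit step that $\alpha\ne0$, which follows since $\alpha=0$ would force $\beta=\alpha(\lambda_0-A)^{-1}v=0$) goes through verbatim.
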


\noindent
{\bf Remark}\, Part (i) was proven in \cite{H} (for $\tau=1$) where it was shown that the eigenvalues are related to the zeros of some explicit entire function. Here we give a different proof which avoids that detailed analysis.  

\begin{proof}
Throughout the proof, we write $E_p$ instead of $E_p(\sigma,\tau)$ for readability.
The idea of the proof is to separate the first row and first column of $E_p$. 
We write 
$$\ell^2(\bbZ_+)=\bbC\oplus\ell^2(\bbN)$$ 
and with respect to this decomposition write $E_p$ as a $2\times2$ block matrix
$$
E_p=
\begin{pmatrix}
1 & \jap{\cdot,a_p}
\\
a_p & E_p^\perp
\end{pmatrix},
$$
where $a_p=\{p^{-k(\tau-\sigma)}\}_{k=1}^\infty\in \ell^2(\bbN)$, and $E_p^\perp$ is the matrix $E_p$ with the top row and first column removed. We first estimate the norms of $a_p$ and $E_p^\perp$. For $a_p$ we have 
$$
\norm{a_p}^2=\sum_{j=1}^\infty p^{-2k(\tau-\sigma)}=\frac{p^{-2(\tau-\sigma)}}{1-p^{-2(\tau-\sigma)}}=\calO(p^{-2(\tau-\sigma)})=\calO(p^{-(\tau+\rho)}),
$$
as $p\to\infty$. 
Before coming to $E_p^\perp$, we notice that under our assumptions, the Hilbert-Schmidt norm of $E_p$ is finite and bounded for $p\to\infty$:
\begin{align*}
\Tr (E_p)^2&=\sum_{j,k=0}^\infty p^{2\sigma(j+k)}p^{-2\tau(j\vee k)}
\leq
2\sum_{j=0}^\infty \sum_{k=j}^\infty p^{2\sigma(j+k)}p^{-2\tau k}
\\
&
=\frac2{1-p^{2(\sigma-\tau)}}\sum_{j=0}^\infty p^{(4\sigma-2\tau)j}
=\frac2{1-p^{2(\sigma-\tau)}}\sum_{j=0}^\infty p^{-2\rho j}=\frac2{1-p^{-2(\tau+\rho)}}\frac1{1-p^{-2\rho}}.
\end{align*}
It follows that 
$$
\Tr (E_p^\perp)^2=\sum_{j,k=1}^\infty p^{2\sigma(j+k)}p^{-2\tau(j\vee k)}
=p^{-2\rho}
\sum_{j,k=0}^\infty p^{2\sigma(j+k)}p^{-2\tau(j\vee k)}
=\calO(p^{-2\rho}),
$$
as $p\to\infty$. 

Let us prove (i). From the above norm estimates for $a_p$ and $E_p^\perp$ and from the fact that the Hilbert-Schmidt norm dominates the operator norm, we conclude that 
$$
\Norm{E_p-\begin{pmatrix} 1& 0\\ 0&0\end{pmatrix}}\to0, \quad p\to\infty
$$
and thus the top eigenvalue $\lambda_0(E_p)$ is simple (for sufficiently large $p$) and $\lambda_0(E_p)\to1$ as $p\to\infty$. 
We write an eigenvalue equation for $E_p$ on the eigenvector $x=(x_0,x_1)$, $x_0\in\bbC$, $x_1\in\ell^2(\bbN)$:
\begin{align*}
x_0+\jap{x_1,a_p}&=\lambda x_0,
\\
x_0a_p+E_p^\perp x_1&=\lambda x_1.
\end{align*}
Let $\lambda=\lambda_0(E_p)$; then for all sufficiently large $p$ the operator $\lambda I-E_p^\perp$ is invertible. Using this fact, we express $x_1$ from the second equation and substitute it in the first one. This yields
$$
\lambda=1+\jap{(\lambda-E_p^\perp)^{-1}a_p,a_p}, \quad \lambda=\lambda_0(E_p).
$$
Since the norm of the inverse $(\lambda_{0}(E_p)-E_p^\perp)^{-1}$ remains bounded as $p\to\infty$, we have 
$$
\lambda_0(E_p)=1+\calO(\norm{a_p}^2)=1+\calO(p^{-(\tau+\rho)}), \quad p\to\infty.
$$
Putting this together yields the required estimate. 

Let us prove (ii). By the spectral theorem for compact self-adjoint operators, the resolvent $(E_p-\lambda I)^{-1}$ has poles at the eigenvalues, with residues equal to (minus) the projection onto the corresponding eigenspace. It follows that 
$$
\abs{\jap{\varphi_{p},e_0}}^2=-\Res_{\lambda=\lambda_0(E_p)}\jap{(E_p-\lambda I)^{-1}e_0,e_0}.
$$
To compute the resolvent on the element $e_0$, we follow the same procedure as above: if $(E_p-\lambda I)^{-1}e_0=(x_0,x_1)$, then we have 
\begin{align*}
(1-\lambda)x_0+\jap{x_1,a_p}&=1,
\\
x_0a_p+(E_p^\perp-\lambda I)x_1&=0.
\end{align*}
Expressing $x_1$ from the second equation and substituting into the first one, we find
$$
\jap{(E_p-\lambda I)^{-1}e_0,e_0}=x_0=\frac{1}{1-\lambda+\jap{(\lambda-E_p^\perp)^{-1}a_p,a_p}}
$$
Computing the residue, we find
$$
\Res_{\lambda=\lambda_0(E_p)}\jap{(E_p-\lambda I)^{-1}e_0,e_0}
=
-
\frac1{1+\norm{(\lambda_0(E_p)-E_p^\perp)^{-1}a_p}^2}.
$$
Finally, just as in the proof of part (i), we find 
$$
\norm{(\lambda_0(E_p)-E_p^\perp)^{-1}a_p}^2=\calO(\norm{a_p}^2)=\calO(p^{-(\tau+\rho)}), \quad p\to\infty.
$$
Putting this together yields the required estimate. 
\end{proof}

\section{Spectral analysis of $E(\sigma,\tau)$}\label{sec.d}

\subsection{The product formula for the eigenvalues of $E(\sigma,\tau)$}
Our next aim is to prove that the eigenvalues of $E(\sigma,\tau)$ are given by products of the eigenvalues of $E_p(\sigma,\tau)$.
\begin{theorem}\label{thm.c3}
Let $\rho>0$ and $\tau+\rho>1$. Then the operator $E(\sigma,\tau)$ is bounded and has a pure point spectrum. 
There exists an enumeration $\{\lambda_n(E(\sigma,\tau))\}_{n=1}^\infty$ of the  eigenvalues of $E(\sigma,\tau)$ (counting multiplicities) such that
\begin{equation}
\lambda_n(E(\sigma,\tau))=\prod_{p \text{ \rm  prime}} \lambda_{k_p}(E_p(\sigma,\tau)), \quad\text{ if }\quad n=\prod_{p \text{ \rm prime}}p^{k_p}.
\label{b14}
\end{equation}
\end{theorem}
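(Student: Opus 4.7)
The plan is to make rigorous the heuristic identity $E(\sigma,\tau) = \bigotimes_p E_p(\sigma,\tau)$ suggested by \eqref{b300}. I would realize $\ell^2(\bbN)$ as the incomplete infinite tensor product of copies of $\ell^2(\bbZ_+)$ indexed by primes, stabilized by $e_0$ in each factor, via the isomorphism $\delta_n \leftrightarrow \bigotimes_p e_{k_p}^{(p)}$ when $n = \prod_p p^{k_p}$. For each finite set $P$ of primes, set $\bbN_P = \{n \in \bbN : n = \prod_{p \in P} p^{k_p}\}$ and let $Q_P$ be the orthogonal projection of $\ell^2(\bbN)$ onto $\ell^2(\bbN_P)$. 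Because $\bbN_P$ is closed under GCD and LCM, formula \eqref{b300} identifies $Q_P E(\sigma,\tau) Q_P$, acting on $\ell^2(\bbN_P) \cong \bigotimes_{p \in P} \ell^2(\bbZ_+)$, with the finite tensor product $\bigotimes_{p \in P} E_p(\sigma,\tau)$, whose operator norm is $\prod_{p \in P} \lambda_0(E_p(\sigma,\tau))$. By Lemma~\ref{lma.c2}(i) and the hypothesis $\tau + \rho > 1$, the infinite product $C := \prod_p \lambda_0(E_p(\sigma,\tau))$ converges. A density argument (every finitely supported vector lies in some $\ell^2(\bbN_P)$) then extends the uniform bound $\jap{Ex,x}\le C\norm{x}^2$ to all of $\ell^2(\bbN)$, showing $E(\sigma,\tau)$ is bounded with norm at most $C$; combined with $Q_P \to I$ strongly, the uniform bound also gives $Q_P E Q_P \to E(\sigma,\tau)$ strongly.

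Next I would construct the eigenvectors. Write $\varphi_{p, k}$ for the normalized eigenvector of $E_p(\sigma,\tau)$ corresponding to $\lambda_k(E_p(\sigma,\tau))$, with $\varphi_{p, 0}$ as in Lemma~\ref{lma.c2}. Fix the phases so that $\jap{\varphi_{p, 0}, e_0} > 0$; then Lemma~\ref{lma.c2}(ii) gives $\jap{\varphi_{p, 0}, e_0} = 1 + \calO(p^{-(\tau + \rho)})$, and hence $\prod_p \jap{\varphi_{p, 0}, e_0}$ converges. For each multi-index $k = (k_p)$ with $k_p = 0$ outside a finite set, and each finite $P \supseteq \supp k$, set $\Psi_k^{(P)} = \bigotimes_{p \in P} \varphi_{p, k_p} \in \ell^2(\bbN_P) \subset \ell^2(\bbN)$. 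A direct inner-product computation in the finite tensor product gives $\jap{\Psi_k^{(P)}, \Psi_k^{(P')}} = \prod_{p \in P' \setminus P} \jap{\varphi_{p, 0}, e_0}$ for $P \subseteq P'$, which tends to $1$, so $(\Psi_k^{(P)})_P$ is Cauchy in $\ell^2(\bbN)$ and converges to a unit vector $\Psi_k$. Since $Q_P E Q_P \Psi_k^{(P)} = \bigl(\prod_{p \in P} \lambda_{k_p}(E_p(\sigma,\tau))\bigr)\Psi_k^{(P)}$, the strong convergence $Q_P E Q_P \to E(\sigma,\tau)$ and $\Psi_k^{(P)} \to \Psi_k$ yield $E(\sigma,\tau)\Psi_k = \lambda_k \Psi_k$ with $\lambda_k = \prod_p \lambda_{k_p}(E_p(\sigma,\tau))$. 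Orthonormality $\jap{\Psi_k, \Psi_{k'}} = \delta_{k, k'}$ follows from the analogous identity $\jap{\Psi_k^{(P)}, \Psi_{k'}^{(P)}} = \prod_{p \in P} \jap{\varphi_{p, k_p}, \varphi_{p, k'_p}}$ for $P$ containing both supports, passed to the limit.

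The main obstacle is establishing completeness of $\{\Psi_k\}$. The cleanest route invokes Guichardet's theorem on incomplete tensor products: since $\sum_p \abs{1 - \jap{e_0, \varphi_{p, 0}}} < \infty$ (again by Lemma~\ref{lma.c2}(ii) and $\tau + \rho > 1$), the incomplete tensor products of $\ell^2(\bbZ_+)$ relative to the stabilizing sequences $(e_0)_p$ and $(\varphi_{p, 0})_p$ are canonically the same Hilbert space, and $\{\Psi_k\}$ is by construction an ONB for the latter description. Alternatively, one can prove completeness directly by Parseval: for $x$ finitely supported in some $\ell^2(\bbN_{P_0})$, the inner product $\jap{x, \Psi_k^{(P)}}$ (for $P \supseteq P_0 \cup \supp k$) factorizes into a part involving only $k|_{P_0}$ times $\prod_{p \in P \setminus P_0} \jap{e_0, \varphi_{p, k_p}}$; grouping $\sum_k \abs{\jap{x, \Psi_k}}^2$ by the finite set $\supp k \setminus P_0$, using Parseval $\sum_{j \geq 0} \abs{\jap{e_0, \varphi_{p, j}}}^2 = 1$ in each factor $\ell^2(\bbZ_+)$, and applying dominated convergence as $P \to \infty$, one arrives at $\sum_k \abs{\jap{x, \Psi_k}}^2 = \norm{x}^2$. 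Density of finitely supported vectors then gives completeness of $\{\Psi_k\}$, so $E(\sigma,\tau)$ has pure point spectrum, and the bijection $k \leftrightarrow n = \prod_p p^{k_p}$ furnishes the enumeration \eqref{b14}.
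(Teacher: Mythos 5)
Your argument is correct in substance but takes a genuinely different route from the paper. The paper deliberately avoids the formalism of infinite tensor products: it diagonalises each $E_p(\sigma,\tau)$ by a unitary $U_p$, normalised so that $\jap{U_pe_0,e_0}>0$, defines a candidate unitary $U$ on $\ell^2(\bbN)$ by prescribing its matrix elements as the infinite products $\prod_p\jap{U_pe_{j_p},e_{k_p}}$, and then verifies unitarity of $U$ and the identity $E(\sigma,\tau)=UDU^*$ (with $D$ diagonal with entries \eqref{b14}) by direct matrix-element computations, justifying by hand the interchanges of sums over $\bbZ_+^{(\infty)}$ with products over primes; boundedness and pure point spectrum then come for free from $E=UDU^*$. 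You instead compress to the subspaces $\ell^2(\bbN_P)$, identify $Q_PEQ_P$ there with $\bigotimes_{p\in P}E_p(\sigma,\tau)$, deduce boundedness and $Q_PEQ_P\to E$ strongly, build eigenvectors $\Psi_k$ as norm limits of finite tensor products of eigenvectors, and get completeness either from Guichardet's criterion ($\sum_p\abs{1-\jap{e_0,\varphi_{p,0}}}<\infty$) or from a Parseval computation on finitely supported vectors. Both proofs rest on the same analytic inputs, namely Lemma~\ref{lma.c2}(i) for the convergence of $\prod_p\lambda_0(E_p(\sigma,\tau))$ and Lemma~\ref{lma.c2}(ii) for the convergence of $\prod_p\jap{\varphi_{p,0},e_0}$. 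What your route buys is an explicit exhibition of the eigenvectors of $E(\sigma,\tau)$ and the delegation of the completeness issue to a standard theorem (or a clean Parseval identity); what the paper's route buys is a self-contained, elementary argument that never leaves $\ell^2(\bbZ_+^{(\infty)})$. If you take the Parseval route rather than citing Guichardet, do spell out the monotone-convergence step showing that $\sum\prod_{p\notin P_0}\abs{\jap{e_0,\varphi_{p,k_p}}}^2=1$, the sum being over finitely supported multi-indices outside $P_0$.

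One caveat: the theorem is stated, and the paper stresses this, without the hypothesis $\tau>0$, so the $E_p(\sigma,\tau)$ need not be positive semidefinite, and two of your steps quietly use positivity: the identification $\norm{\bigotimes_{p\in P}E_p(\sigma,\tau)}=\prod_{p\in P}\lambda_0(E_p(\sigma,\tau))$, and the inference of boundedness from the one-sided form bound $\jap{Ex,x}\le C\norm{x}^2$. Both are easily repaired: for self-adjoint operators $\norm{A\otimes B}=\norm{A}\,\norm{B}$ in general, and the rank-one-plus-small decomposition used in the proof of Lemma~\ref{lma.c2} shows that the negative part of the spectrum of $E_p(\sigma,\tau)$ is $o(1)$, hence $\norm{E_p(\sigma,\tau)}=\lambda_0(E_p(\sigma,\tau))=1+\calO(p^{-(\tau+\rho)})$ for all large $p$, while the finitely many remaining primes contribute a harmless constant via the Hilbert--Schmidt bound; then $\sup_P\norm{Q_PEQ_P}<\infty$ and boundedness follows from the two-sided bound $\abs{\jap{Ex,y}}\le C\norm{x}\,\norm{y}$ on finitely supported vectors. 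As written, however, your proof covers only the positive-definite case $\tau>0$, which is the case needed for Theorem~\ref{thm1}.
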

We observe that this theorem does not require $\tau>0$ (cf. \eqref{a0}). 
\begin{proof}[Proof of Theorem~\ref{thm.c3}]
We start by introducing some notation. We denote the standard basis in $\ell^2(\bbZ_+)$ by $e_j$, $j\in\bbZ_+$. Furthermore, writing each natural number $n$ as in \eqref{b14}, we can identify $\bbN$ with the subset $\bbZ_+^{(\infty)}$ of the infinite Cartesian product $\bbZ_+^\infty$; by definition, the subset $\bbZ_+^{(\infty)}$ consists of all sequences $\{k_p\}_{p \text{ prime}}$ such that $k_p=0$ for all but finitely many $p$'s.  In particular, we have $\ell^2(\bbN)=\ell^2(\bbZ_+^{(\infty)})$. The standard basis in $\ell^2(\bbZ_+^{(\infty)})$ will be denoted by $\bfe_j$, where $j\in \bbZ_+^{(\infty)}$. More precisely, 
$$
\bfe_j(k)=\delta_{j,k}=\prod_{p\text{ prime}} \delta_{j_p,k_p},\quad k\in \bbZ_+^{(\infty)}.
$$
In the rest of the proof, we write $E_p$ in place of $E_p(\sigma,\tau)$ for readability.

Heuristically, the strategy of the proof is to diagonalise $E(\sigma,\tau)$ by diagonalising $E_p$ in each term of the infinite tensor product \eqref{b3a}. However, we prefer to avoid the highly abstract language of infinite tensor products and express this idea in more concrete terms. 

For each $p$, let $D_p$ be the operator of multiplication by the (non-increasingly ordered) sequence of eigenvalues of $E_p$ in $\ell^2(\bbZ_+)$:
$$
D_pe_j=\lambda_j(E_p)e_j, \quad j\in\bbZ_+.
$$
In other words, $D_p$ is the infinite diagonal matrix with elements $\{\lambda_j(E_p)\}_{j=0}^\infty$ on the diagonal. 
Further, let $U_p$ be a unitary operator in $\ell^2(\bbZ_+)$ that diagonalises $E_p$, i.e. 
$$
U_p^*E_pU_p=D_p.
$$
The above relation allows for a multiplication of $U_p$ by a unimodular complex constant. We fix this constant so that the top left entry of the matrix representation of $U_p$ is positive:
$$
\jap{U_pe_0,e_0}>0.
$$
Observe that $U_pe_0$ is the eigenvector of $E_p$ corresponding to the top eigenvalue $\lambda_0(E_p)$. Recalling Lemma~\ref{lma.c2}(ii), we find that with this normalisation,
$$
\jap{U_pe_0,e_0}=1+\calO(p^{-(\tau+\rho)}), \quad p\to\infty, 
$$
and therefore the infinite product 
\begin{equation}
\prod_{p\text{ prime}} \jap{U_pe_0,e_0}
\label{b11b}
\end{equation}
converges. 

Next, we define the operator $U$ in $\ell^2(\bbN)=\ell^2(\bbZ_+^{(\infty)})$ through its matrix representation in the standard basis as follows:
$$
\jap{U\bfe_j,\bfe_k}=\prod_{p\text{ prime}}\jap{U_pe_{j_p},e_{k_p}}.
$$
Since $j_p=k_p=0$ for all but finitely many primes $p$, we see that the convergence of this infinite product follows from the convergence of the product \eqref{b11b}. 

From this definition it is not even obvious that $U$ is bounded. However, we shall prove that in fact $U$ is unitary. 
First we need to introduce some notation, related to representing infinite sums over $\bbZ_+^{(\infty)}$ as limits of finite sums. 
For each $p$, let us fix an integer $L_p\geq0$ such that $L_p=0$ for all sufficiently large $p$. Now 
let $\calL$ be a set of the following form:
$$
\calL=\{\ell\in\bbZ_+^{(\infty)}: 0\leq \ell_p\leq L_p\}.
$$
If $\calL^{(i)}$ is a sequence of sets of this form, we will write $\calL^{(i)}\to\bbZ_+^{(\infty)}$, if $L_p^{(i)}\to\infty$ as $i\to\infty$ for all $p$. In particular, for such sequence we have $\cup_i \calL^{(i)}=\bbZ_+^{(\infty)}$. Below we suppress the dependence on the index $i$. 

For a given $j\in\bbZ_+^{(\infty)}$, let us compute the norm of $U\bfe_j$: 
$$
\norm{U\bfe_j}^2
=
\lim_{\calL\to\bbZ_+^{(\infty)}}
\sum_{k\in \calL}\Abs{\prod_{p\text{ prime}}\jap{U_pe_{j_p},e_{k_p}}}^2
$$
and the norm is finite as long as the limit exists. We have
$$
\sum_{k\in \calL}\Abs{\prod_{p\text{ prime}}\jap{U_pe_{j_p},e_{k_p}}}^2
=\sum_{k\in\calL}\prod_{p\text{ prime}}\abs{\jap{U_pe_{j_p},e_{k_p}}}^2
=\prod_{p\text{ prime}}\sum_{k_p=0}^{L_p}\abs{\jap{U_pe_{j_p},e_{k_p}}}^2
$$
(to understand why the second equality is true, expand the product in the right hand side).
But 
$$
\lim_{L_p\to\infty}\sum_{k_p=0}^{L_p}\abs{\jap{U_pe_{j_p},e_{k_p}}}^2=1
$$
for each prime $p$, because each $U_p$ is unitary. 
It follows that 
$$
\lim_{\calL\to\bbZ_+^{(\infty)}}
\prod_{p\text{ prime}}\sum_{k_p=0}^{L_p}\abs{\jap{U_pe_{j_p},e_{k_p}}}^2
=1
$$
and so $\norm{U\bfe_j}^2=1$. 

Our next aim is to prove that 
\begin{equation}
\jap{U\bfe_j,U\bfe_k}=\delta_{j,k},\quad j,k\in\bbZ_+^{(\infty)}.
\label{b11c}
\end{equation}
We have
$$
\jap{U\bfe_j,U\bfe_k}=\sum_{\ell\in\bbZ_+^{(\infty)}}\jap{U\bfe_j,\bfe_\ell}\jap{\bfe_\ell,U\bfe_k},
$$
where the series converges absolutely by Cauchy-Schwarz and the previous step of the proof. 
Using the definition of $U$ and the unitarity of $U_p$, we find
\begin{align*}
\sum_{\ell\in \bbZ_+^{(\infty)}}\jap{U\bfe_j,\bfe_\ell}\jap{\bfe_\ell,U\bfe_k}
&=
\sum_{\ell\in \bbZ_+^{(\infty)}}\prod_{p\text{ prime}}\jap{U_pe_{j_p},e_{\ell_p}}\jap{e_{\ell_p},U_pe_{k_p}}
\\
&=
\prod_{p\text{ prime}}
\sum_{\ell_p=0}^{\infty} \jap{U_pe_{j_p},e_{\ell_p}}\jap{e_{\ell_p},U_pe_{k_p}}
\\
&=
\prod_{p\text{ prime}}\jap{U_pe_{j_p},U_pe_{k_p}}
=
\prod_{p\text{ prime}} \delta_{j_p,k_p}=\delta_{j,k},
\end{align*}
where the interchange of limiting operations can be justified as on the previous step, by passing to the finite sums and using the absolute convergence. 
We have proved \eqref{b11c} and therefore $U$ is unitary. 

Consider the operator $D$ in $\ell^2(\bbZ_+^{(\infty)})$, defined by its action on the standard basis $\{\bfe_{k}\}_{k\in\bbZ_+^{(\infty)}}$ by 
$$
D\bfe_k= \left(\prod_{p \text{ prime}} \lambda_{k_p}(E_p)\right)\bfe_k.
$$
In other words, $D$ is the operator of multiplication by the sequence \eqref{b14} in $\ell^2(\bbZ_+^{(\infty)})$. 
We first note that by Lemma~\ref{lma.c2}(i), the infinite product above converges and is uniformly bounded; thus, $D$ is a bounded operator. Our aim is to check that 
$$
E(\sigma,\tau)=UDU^*.
$$
In other words, we need to check that the matrix representation of the operator $UDU^*$ in the standard basis in $\ell^2(\bbZ_+^{(\infty)})$ is given by the matrix $E(\sigma,\tau)$. For given $j,k\in\bbZ_+^{(\infty)}$ we have
$$
\jap{UDU^*\bfe_j,\bfe_k}
=
\jap{DU^*\bfe_j,U^*\bfe_k}
=
\sum_{\ell\in \bbZ_+^{(\infty)}}
\left(\prod_{p \text{ prime}}\lambda_{\ell_p}(E_p)\right)\jap{U^*\bfe_j,\bfe_\ell}\jap{\bfe_\ell,U^*\bfe_k},
$$
where the sum is absolutely convergent by Cauchy-Schwarz because $U^*\bfe_j$ and $U^*\bfe_k$ are in $\ell^2(\bbZ_+^{(\infty)})$ and the eigenvalues of $D$ are uniformly bounded. Recalling the definition of $U$ and interchanging the summation and the infinite product as before, we find

\begin{align*}
\jap{UDU^*\bfe_j,\bfe_k}
&=
\sum_{\ell\in\bbZ_+^{(\infty)} }\prod_{p \text{ prime}} \lambda_{\ell_p}(E_p)\jap{U_p^*e_{j_p},e_{\ell_p}}\jap{e_{\ell_p},U_p^*e_{k_p}}
\\
&=
\prod_{p \text{ prime}}
\sum_{\ell_p=0}^{\infty}\lambda_{\ell_p}(E_p)\jap{U_p^*e_{j_p},e_{\ell_p}}\jap{e_{\ell_p},U_p^*e_{k_p}}.
\end{align*}
By the definition of $U_p$, we have $U_pD_pU_p^*=E_p$, and so 
$$
\sum_{\ell_p=0}^{\infty}\lambda_{\ell_p}(E_p)\jap{U_p^*e_{j_p},e_{\ell_p}}\jap{e_{\ell_p},U_p^*e_{k_p}}
=
\frac{p^{\sigma j_p}p^{\sigma k_p}}{p^{\tau(j_p\vee k_p)}}.
$$
It follows that 
$$
\jap{UDU^*\bfe_j,\bfe_k}=\prod_{p\text{ prime}} \frac{p^{\sigma j_p}p^{\sigma k_p}}{p^{\tau(j_p\vee k_p)}}.
$$
Recalling \eqref{b300}, we conclude that $E(\sigma,\tau)=UDU^*$ as claimed. It follows that the spectrum of $E(\sigma,\tau)$ is pure point and is given by the eigenvalues \eqref{b14}.
\end{proof}

\subsection{Proof of Theorem~\ref{thm1}}
Our main task is to establish the asymptotics \eqref{s9}. 
First let us introduce the eigenvalue counting function for $E(\sigma,\tau)$ as follows:
$$
\mu(t)=\#\{n\in\bbN: \lambda_n(E(\sigma,\tau))>1/t\}, \quad t>0.
$$
It is clear that $\mu$ is non-decreasing and $\mu(t)=0$ for all sufficiently small $t$ (more precisely, for $t<\norm{E(\sigma,\tau)}^{-1}$). Furthermore, it is easy to see that the desired eigenvalue asymptotics \eqref{s9} is equivalent to the asymptotics 
\begin{equation}
\mu(t)=\varkappa(\sigma,\tau)^{-1/\rho}t^{1/\rho}+o(t^{1/\rho}), \quad t\to\infty. 
\label{b12}
\end{equation}
Below our aim is to prove \eqref{b12}. 
We do this by using the following Tauberian theorem due to Wiener and Ikehara, see e.g. \cite[Theorem 4.1]{Korevaar} (the version given below differs only by scaling). 
\begin{theorem}\label{thm.tauberian}
Let $\mu=\mu(t)$ be a non-negative non-decreasing function on $\bbR$, vanishing for all $t\leq t_0$ with some $t_0>0$ and such that the Mellin-Stieltjes transform 
$$
f(s)=\int_{t_0}^\infty t^{-s}d\mu(t)
$$
exists (i.e. the integral converges absolutely) for $\Re s>s_0>0$. Suppose that for some $A>0$, the analytic function 
$$
f(s)-\frac{A}{s-s_0}, \quad \Re s>s_0
$$
has a continuous extension to the closed half-plane $\Re s\geq s_0$. Then 
$$
\mu(t)t^{-s_0}\to A/s_0, \quad t\to\infty.
$$
\end{theorem}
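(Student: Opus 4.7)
The statement is (a rescaled version of) the classical Wiener--Ikehara Tauberian theorem, so my plan is to reduce it to the standard form via a logarithmic substitution and then outline the classical Fejér-kernel proof.

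First I would substitute $t = e^u$, set $u_0 := \log t_0$, and define $\alpha(u) := \mu(e^u)$. Then $\alpha$ is non-decreasing and vanishes for $u \leq u_0$, and the Mellin--Stieltjes transform becomes a Laplace--Stieltjes transform
$$
f(s) = \int_{u_0}^\infty e^{-su}\, d\alpha(u), \quad \Re s > s_0.
$$
The conclusion $\mu(t)t^{-s_0}\to A/s_0$ becomes $\alpha(u)e^{-s_0 u}\to A/s_0$, while the hypothesis becomes: $f(s) - A/(s-s_0)$ extends continuously to $\Re s \geq s_0$. This is precisely the classical Wiener--Ikehara theorem with $s_0$ in place of the customary value $1$ (a further scaling $v = s_0 u$ reduces to $s_0=1$ if desired), which is the form treated in Korevaar's book.

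To prove the classical Wiener--Ikehara theorem, I would proceed as follows. Integration by parts (legitimate because absolute convergence of $f$ on $\Re s > s_0$ forces $\alpha(u) = O(e^{(s_0+\varepsilon)u})$ for each $\varepsilon>0$) gives $f(s)/s = \int_{u_0}^\infty e^{-su}\alpha(u)\,du$. Writing $\beta(u) := \alpha(u)e^{-s_0 u}$, elementary algebra combined with the hypothesis on $f$ shows that
$$
\Phi(w) := \int_{u_0}^\infty e^{-wu}\bigl(\beta(u) - A/s_0\bigr)\,du, \quad \Re w > 0,
$$
extends continuously to the closed half-plane $\Re w \geq 0$. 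For the Fejér kernel $K_\lambda$, whose Fourier transform $\widehat{K_\lambda}$ is the triangle function supported in $[-\lambda,\lambda]$, Parseval's identity applied at $w = \epsilon + i\tau$ and the limit $\epsilon\downarrow 0$ (justified by the continuous extension of $\Phi$ to the critical line) yield
$$
\int_{\bbR} K_\lambda(u-v)\bigl(\beta(v) - A/s_0\bigr)\,dv = \frac{1}{2\pi}\int_{-\lambda}^\lambda \widehat{K_\lambda}(\tau)\,\Phi(i\tau)\,e^{i\tau u}\,d\tau.
$$
The integrand on the right is continuous and compactly supported in $\tau$, so Riemann--Lebesgue forces the right-hand side to tend to $0$ as $u \to \infty$.

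The main obstacle is the final step: upgrading this convolution-level convergence to the pointwise convergence $\beta(u)\to A/s_0$. Here the monotonicity of $\alpha$ is essential. For $|w|\leq\delta$ one has
$$
\beta(u-\delta)e^{-s_0\delta} \leq \beta(u+w) \leq \beta(u+\delta)e^{s_0\delta},
$$
so the Fejér convolution sandwiches $\beta(u)$ between expressions close to $\beta(u\pm\delta)e^{\pm s_0\delta}$, provided the tail mass of $K_\lambda$ outside $[-\delta,\delta]$ is made negligible by choosing $\lambda$ large relative to $1/\delta$; this requires an a priori bound $\beta = O(1)$, which itself follows from the convolution identity plus monotonicity. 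Letting first $\lambda\to\infty$ and then $\delta\to 0$ gives $\limsup_{u\to\infty}\beta(u) \leq A/s_0 \leq \liminf_{u\to\infty}\beta(u)$, completing the proof. This delicate interplay between the smoothing scale $\lambda$ and the monotonicity window $\delta$ is the characteristic technical core of Wiener--Ikehara.
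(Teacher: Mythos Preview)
The paper does not prove this theorem at all: it is stated as a classical result and attributed to Wiener and Ikehara with a reference to Korevaar's survey \cite{Korevaar}, noting only that the stated version differs from the standard one by scaling. Your proposal is therefore not competing against any argument in the paper; rather, you are supplying exactly what the paper outsources to the literature.

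That said, your outline is a faithful sketch of the classical Fej\'er-kernel proof of Wiener--Ikehara (essentially the argument one finds in Korevaar). The reduction by $t=e^u$ to a Laplace--Stieltjes transform is correct, the integration by parts and the identification of $\Phi(w)$ are fine (the apparent $1/w$ singularity from $\int e^{-wu}(A/s_0)\,du$ cancels against the pole of $f$ because $(1-e^{-wu_0})/w$ is entire), and the two-scale argument with the Fej\'er kernel and the monotonicity window is the standard way to pass from smoothed to pointwise convergence. One small point worth tightening: you invoke the a priori bound $\beta=O(1)$ \emph{after} stating the convolution identity, but in the usual presentation one first extracts $\limsup\beta<\infty$ from the lower-bound side of the sandwich (using only nonnegativity of $\beta$ and of $K_\lambda$), and only then runs the full $\limsup$/$\liminf$ argument. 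As written your sketch is circular at that step, though the fix is routine.
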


\begin{proof}[Proof of Theorem~\ref{thm1}]
By Lemma~\ref{lma.c1}, all eigenvalues of $E_p$ are positive. By the product formula \eqref{b14}, it follows that all eigenvalues of $E(\sigma,\tau)$ are also positive, and so $E(\sigma,\tau)$ is positive definite with trivial kernel. The compactness will follow from the considerations below. 

Let us prove the asymptotic formula \eqref{b12}.
We will use Theorem~\ref{thm.tauberian} with $s_0=1/\rho$. Consider the function 
\begin{equation}
f(s)=\int_{t_0}^\infty t^{-s}d\mu(t)=\sum_{n=1}^\infty \lambda_n(E(\sigma,\tau))^s, \quad \Re s>1/\rho.
\label{b18}
\end{equation}
Our aim is to show that the integral above converges absolutely in the open half-plane $\Re s>1/\rho$ and possesses the analytic properties indicated in Theorem~\ref{thm.tauberian}. By the factorisation \eqref{b14}, we have 
\begin{equation}
f(s)=\prod_{p\text{ prime}}f_p(s), 
\quad
f_p(s)=\sum_{k=0}^\infty \lambda_k(E_p)^s. 
\label{b17}
\end{equation}
By Lemma~\ref{lma.c1}, the eigenvalues of $E_p$ decay exponentially, and therefore the series in the right hand side here converges absolutely in the half-plane $\Re s>0$ and defines an analytic function in this half-plane. Let us estimate $f_p(s)$; we use Lemma~\ref{lma.c2} for $\lambda_0(E_p)$ and Lemma~\ref{lma.c1} for $\lambda_k(E_p)$ with $k\geq1$: 
\begin{align*}
f_p(s)&=(1+\calO(p^{-(\tau+\rho)}))^s+\sum_{k=1}^\infty(1+\calO(p^{-\frac12\tau}))^sp^{-\rho ks}
\\
&=1+\calO_s(p^{-(\tau+\rho)})+\frac{p^{-\rho s}}{1-p^{-\rho s}}+\calO_s(p^{-\frac12\tau -\rho s})
\\
&=1+p^{-\rho s}+\calO_s(p^{-(\tau+\rho)})+\calO_s(p^{-2\rho s})+\calO_s(p^{-\frac12\tau-\rho s}),
\end{align*}
where $\calO_s$ means that the constants in the estimates are uniform in $s$ over compact sets in the half-plane $\Re s>0$. 
This estimate already shows that the infinite product \eqref{b17} converges absolutely for $\Re s>1/\rho$ and therefore the Mellin-Stieltjes transform in \eqref{b18} also converges absolutely. 
Now rewrite this as
\begin{align*}
f_p(s)&=\frac1{1-p^{-\rho s}}g_p(s), 
\\
g_p(s)&=1+\calO_s(p^{-(\tau+\rho)})+\calO_s(p^{-2\rho s})+\calO_s(p^{-\frac12\tau-\rho s}).
\end{align*}
It follows that for $\Re s>1/\rho$ we have 
$$
f(s)=\prod_{p\text{ prime}}(1-p^{-\rho s})^{-1} \prod_{p\text{ prime}} g_p(s)=\zeta(\rho s)g(s), \quad 
g(s)=\prod_{p\text{ prime}} g_p(s).
$$
By the above estimates for $g_p$, the infinite product for $g(s)$ converges in the half-plane where $2\rho \Re s>1$ and $\frac12\tau+\rho\Re s>1$, i.e. in the half-plane $\Re s>s_0$, where 
$$
s_0=\max\{\tfrac1{2\rho},\tfrac{2-\tau}{2\rho}\}<\tfrac1\rho.
$$ 
Thus, $g(s)$ is analytic in this half-plane. 
Since $\zeta(s)$ has a pole at $s=1$ with residue $=1$,  we find that 
$$
f(s)-\frac{g(1/\rho)/\rho}{s-1/\rho}
$$
is analytic in the half-plane $\Re s>s_0$.
Applying the Wiener-Ikehara theorem, we find that $\mu(t)t^{-\frac1\rho}\to g(1/\rho)$ as $t\to\infty$. 
This proves \eqref{b12} and therefore \eqref{s9} with 
\begin{equation}
\varkappa(\sigma,\tau)=g(1/\rho)^{-\rho}.
\label{b19}
\end{equation}
Note that 
\begin{equation}
g(1/\rho) = \prod_{p\text{ prime}} g_p(1/\rho) = \prod_{p\text{ prime}}\Bigl(1-\frac{1}{p}\Bigr)\sum_{k=0}^\infty \lambda_k(E_p)^{1/\rho} > 0,
\label{b16}
\end{equation}
as each term in the product is positive.
\end{proof}

\subsection{Computation of $\varkappa(\sigma,\tau)$ in some cases} \label{sec.d3}
We compute the value of $\varkappa(\sigma,\tau)$ corresponding to the cases $\rho=1$ and $\rho=1/2$. 

For $\rho=1$ we have $\tau=1+2\sigma$ and \eqref{b16} yields
$$
\varkappa(\sigma,1+2\sigma)
=\prod_{p\text{ prime}}\Bigl(1-\frac{1}{p}\Bigr)\sum_{k=0}^\infty \lambda_k(E_p(\sigma,1+2\sigma))
=\prod_{p\text{ prime}}\Bigl(1-\frac{1}{p}\Bigr)\Tr E_p(\sigma,1+2\sigma). 
$$
Recalling that the matrix elements of $E_p(\sigma,\tau)$ are $\{p^{\sigma k}p^{-\tau(j\vee k)}p^{\sigma j}\}_{j,k=0}^\infty$, we compute the trace as the sum of the elements on the diagonal:
$$
\Tr E_p(\sigma,1+2\sigma)=\sum_{k=0}^\infty p^{-\rho k}=\sum_{k=0}^\infty p^{-k}=\frac1{1-\frac1p},
$$
and therefore $\varkappa(\sigma,1+2\sigma)=1$.

If $\rho=1/2$, we have $\tau=\frac12+2\sigma$, and \eqref{b16} yields
$$
g(2)
=\prod_{p\text{ prime}}\Bigl(1-\frac{1}{p}\Bigr)\sum_{k=0}^\infty \lambda_k\bigl(E_p(\sigma,\tfrac12+2\sigma)\bigr)^2
=\prod_{p\text{ prime}}\Bigl(1-\frac{1}{p}\Bigr)\Tr (E_p(\sigma,\tfrac12+2\sigma))^2.
$$
A direct computation gives 
\begin{align*}
\Tr (E_p(\sigma,\tfrac12+2\sigma))^2&=\sum_{j,k=0}^\infty p^{2\sigma(j+k)}p^{-(1+4\sigma)(j\vee k)}
\\
&=\sum_{j=0}^\infty p^{2\sigma j}
\left(\sum_{k=0}^j p^{2\sigma k}p^{-(1+4\sigma)j}+\sum_{k=j+1}^\infty p^{2\sigma k}p^{-(1+4\sigma)k}\right)
\\
&=\sum_{j=0}^\infty p^{2\sigma j}
\left(p^{-(1+4\sigma)j}\frac{p^{2\sigma(j+1)}-1}{p^{2\sigma}-1}+p^{-(1+2\sigma) (j+1)}\frac1{1-p^{-(1+2\sigma)}}\right)
\\
&=
\frac1{p^{2\sigma}-1}\left(p^{2\sigma}\sum_{j=0}^\infty p^{-j}-\sum_{j=0}^\infty p^{-(1+2\sigma) j}\right)
+\frac{p^{-(1+2\sigma)}}{1-p^{-(1+2\sigma)}}\sum_{j=0}^\infty p^{-j}
\\
&=
\frac1{p^{2\sigma}-1}\left(\frac{p^{2\sigma}}{1-p^{-1}}-\frac1{1-p^{-(1+2\sigma)}}\right)+\frac{p^{-(1+2\sigma)}}{1-p^{-(1+2\sigma)}}\frac1{1-p^{-1}}.
\end{align*}
After some elementary algebra, we obtain
$$
(1-p^{-1})\Tr (E_p(\sigma,\tfrac12+2\sigma))^2=\frac{1+p^{-(1+2\sigma)}}{1-p^{-(1+2\sigma)}}=\frac{1-p^{-(2+4\sigma)}}{(1-p^{-(1+2\sigma)})^2}, 
$$
and therefore
$$
g(2)=\zeta(1+2\sigma)^2/\zeta(2+4\sigma), \quad \varkappa(\sigma,\tfrac12+2\sigma)=g(2)^{-1/2}=\sqrt{\zeta(2+4\sigma)}/\zeta(1+2\sigma).
$$

\subsection{Connection with generalised prime systems}\label{sec.d4}
The purpose of this subsection is two-fold. 
Firstly we point out a connection of this topic with the subject of generalised prime systems. 
Secondly, using this connection, we indicate (without going into technical details) how to give a sharper error bound in formula \eqref{s9}:
$$
\lambda_n(E(\sigma,\tau))=\frac{\varkappa(\sigma,\tau)}{n^\rho}+O(n^{-\rho}e^{-(\sqrt{\tau}-\eps)\sqrt{\log n\log\log n}}), \quad n\to\infty,
$$
for any $\eps>0$.

Briefly, a generalized prime system $\calP$ consists of a sequence of real numbers $1<r_1\leq r_2\leq \cdots \leq r_n \to\infty$ (called Beurling primes) and all possible products of powers of these (called Beurling integers), say, $1=n_1<n_2 \leq \cdots$.
One associates with $\calP$ the Beurling zeta function 
$$
\zeta_{\calP}(s)=\prod_{k=1}^\infty \frac1{1-r_k^{-s}}=\sum_{k=1}^\infty \frac1{n_k^s}.
$$
See e.g. \cite{DZ} for the details. 

Now let us denote 
$$
\gamma_n=\frac{\lambda_n(E(\sigma,\tau))}{\lambda_1(E(\sigma,\tau))}, \quad n\in\bbN,
\quad\text{ and }\quad
\gamma_{k,p}=\frac{\lambda_k(E_p(\sigma,\tau))}{\lambda_0(E_p(\sigma,\tau))}, \quad k\in\bbZ_+
$$
and consider $\{\gamma_{1,p}^{-1/\rho}\}_{p\text{ prime}}$ as a sequence of Beurling primes. 
By Lemmas~\ref{lma.c1} (with $k=1$) and \ref{lma.c2}(i), this sequence satisfies
$$
\gamma_{1,p}^{-1/\rho}=p+\calO(p^{1-\frac12\tau}), \quad p\to\infty.
$$
This case was discussed by Balazard \cite[Theorem~9]{Balazard} where it is proven that the counting function of the corresponding Beurling integers satisfies
$$
\#\{k: n_k<x\}=cx(1+O(e^{-(\sqrt{\tau}-\eps)\sqrt{\log x\log\log x}})), \quad x\to\infty,
$$
for some $c>0$ and any $\eps>0$. Rescaling, we find that the counting function of the sequence 
\begin{equation}
\widetilde \lambda_n=\prod_{p\text{ prime}}\gamma_{1,p}^{k_p}, 
\quad n=\prod_{p\text{ prime}}p^{k_p}
\label{b20}
\end{equation}
satisfies 
$$
\#\{n: \widetilde \lambda_n>\lambda\}=c\lambda^{-1/\rho}(1+O(e^{-(\sqrt{\tau/\rho}-\eps)\sqrt{\abs{\log \lambda}\log\abs{\log \lambda}}})), \quad \lambda\to0_+,
$$
for any $\eps>0$. 

Now it remains to show that the counting function of the eigenvalues of $E(\sigma,\tau)$ obeys the same asymptotic relation. 
In other words, we claim that replacing $\gamma_{1,p}^{k_p}$ by $\gamma_{k_p,p}$ in \eqref{b20} does not affect the asymptotics, i.e. the eigenvalues $\lambda_{k}(E_p(\sigma,\tau))$ with $k\geq2$ act somewhat similar to higher powers of primes. 

To see that, we come back to the Mellin-Stieltjes transform \eqref{b18} and find
\begin{align*}
f(s)&=\sum_{n=1}^\infty \lambda_n(E(\sigma,\tau))^s=\lambda_1(E(\sigma,\tau))^s\sum_{n=1}^\infty \gamma_n^s
=\lambda_1(E(\sigma,\tau))^s\prod_{p\text{ prime}}\left(1+\sum_{k=1}^\infty \gamma_{k,p}^s\right)
\\
&=\lambda_1(E(\sigma,\tau))^s\left(\prod_{p\text{ prime}}\frac1{1-\gamma_{1,p}^s}\right)\prod_{p\text{ prime}}(1-\gamma_{1,p}^s)\sum_{k=0}^\infty \gamma_{k,p}^s=\zeta_{\calP}(\rho s)\widetilde g(s),
\end{align*}
where $\zeta_{\calP}$ is the Beurling zeta function of our generalised prime system and $\widetilde g$ is a function which is (by a calculation similar to the one in the proof of Theorem~\ref{thm1}) analytic and bounded in an open half-plane containing $\Re s\geq1/\rho$. 
From here it is not difficult to show that the main contribution to the eigenvalue counting function is given by the term $\zeta_{\calP}(\rho s)$, which has a simple pole at $s=1/\rho$. 


\begin{thebibliography}{HLP}

\bibitem{A} C. Aistleitner, Lower bounds for the maximum of the Riemann zeta function along vertical lines, {\em Math. Ann.} {\bf 365} (2016), 473--496. 

\bibitem{Balazard}
M. Balazard, 
\emph{La version de Diamond de la m\'{e}thode de l'hyperbole de Dirichlet,} 
L'Enseignement Math\'{e}matique \textbf{45} (1999), 253--270.

\bibitem{Bedos}
E.~B\'edos, 
\emph{On F{\o}lner nets, Szeg\H{o}'s theorem and other eigenvalue distribution theorems,}
Exposition. Math. \textbf{15} (1997), 193--228.

\bibitem{BoBoGr}
J.~M.~Bogoya, A.~B\"ottcher,  S.~M.~Grudsky, 
\emph{Eigenvalues of Hermitian Toeplitz matrices with polynomially increasing entries,} 
J. Spectr. Theory \textbf{2} no. 3 (2012),  267--292. 


\bibitem{BS} A. Bondarenko and K. Seip, 
\emph{Large GCD sums and extreme values of the Riemann zeta function,} 
Duke Math. J. {\bf 166} (2017), 1685--1701.

\bibitem{BL}
K. Bourque and S. Ligh, 
\emph{Matrices associated with  arithmetical functions,} 
Linear and Multilinear Algebra, \textbf{34} (1993), 261--267,


\bibitem{Bo}
A.~B\"ottcher, 
\emph{Schatten norms of Toeplitz matrices with Fisher-Hartwig singularities,} 
Electron. J. Linear Algebra \textbf{15} (2006), 251--259.

\bibitem{BoVi}
A.~B\"ottcher, J.~Virtanen, 
\emph{Norms of Toeplitz matrices with Fisher-Hartwig symbols,}
SIAM J. Matrix Anal. Appl. \textbf{29} (2007), no. 2, 660--671.

\bibitem{DZ}
H. Diamond, W.-B. Zhang, 
\emph{Beurling Generalized Numbers,} 
Mathematical Surveys and Monographs Vol. 213, AMS, Rhode Island, 2016.

\bibitem{Hauk}
P. Haukkanen, 
\emph{An upper bound for the $l_p$-norm of a GCD-related matrix,} 
J. Inequal. Appl. (2006), 1--6.

\bibitem{HLS}
H.~Hedenmalm, P.~Lindqvist, K.~Seip,
\emph{A Hilbert space of Dirichlet series and systems of dilated functions in $L^2(0,1)$.} 
Duke Math. J. \textbf{86} (1997), no. 1, 1--37. 

\bibitem{H3} 
T. Hilberdink, 
\emph{Multiplicative Toeplitz Operators and the Riemann zeta function,} in: 
EMS Series of Lectures in Mathematics, European Mathematical Society 2015.

\bibitem{H2}
T.~Hilberdink, 
\emph{Singular Values of Multiplicative Toeplitz matrices,} 
Linear and Multilinear Algebra {\bf 65} (2017), 813--829.

\bibitem{H}
T.~Hilberdink,
\emph{Matrices with multiplicative entries are tensor products,} 
Linear Algebra and its Applications, \textbf{532} (2017), 179--197.

\bibitem{HL}
S. Hong and R. Loewy, 
\emph{Asymptotic behavior of eigenvalues of greatest common divisor matrices,}
Glasg. Math. J. {\bf 46} (2004), 551--569.

\bibitem{HEL}
S. Hong and K. S. Enoch Lee, 
\emph{Asymptotic behavior of eigenvalues of reciprocal power LCM matrices,}
Glasg. Math. J. {\bf 50} (2008), 163--174.


\bibitem{Korevaar}
J.~Korevaar, 
\emph{A century of complex Tauberian theory,}
Bull. Amer. Math. Soc. (N.S.) \textbf{39}, no.4 (2002),  475--531. 

\bibitem{LS}
P.~Lindqvist, K.~Seip,
\emph{Note on some greatest common divisor matrices,}
Acta Arith. \textbf{84}, no.2  (1998), 149--154.



\bibitem{MH}
M. Mattila and P. Haukkanen, 
\emph{On the eigenvalues of certain number-theoretic matrices,}
East-West J. Math. {\bf 14} (2012), 121--130.

\bibitem{Nikolski-Toeplitz}
N.~Nikolski, 
\emph{Toeplitz Matrices and Operators,}
Cambridge University Press, Cambridge, to appear. 

\bibitem{Nikolski-Pushnitski}
N.~Nikolski, A.~Pushnitski,
\emph{Szeg\H{o}-type limit theorems for ``multiplicative Toeplitz'' operators and non-F{\o}lner approximations,}
to appear in St.Petersburg Math. J.

\bibitem{Nikolski3}
N.~Nikolski,
\emph{In a shadow of the RH: cyclic vectors of Hardy spaces on the Hilbert multidisc,} 
Ann. Inst. Fourier (Grenoble) \textbf{62}, no. 5 (2012),  1601--1626.

\bibitem{Smith}
H.~J.~S.~Smith, 
\emph{On the value of a certain arithmetical determinant,} 
Proc. London Math. Soc. \textbf{7} (1875-1876), 208--212.

\bibitem{Toeplitz}
O.~Toeplitz, 
\emph{Zur Theorie der Dirichletschen Reihen,}  
Amer. J. Math. \textbf{60}, no. 4 (1938),  880--888. 

\bibitem{W}
A.~Wintner, 
\emph{Diophantine approximations and Hilbert's space,}
Amer. J. Math. \textbf{66}, no.4 (1944), 564--578.

\end{thebibliography}
\end{document}